\newtheorem{theorem}{Theorem}
\newtheorem{definition}[theorem]{Definition}
\newtheorem{lemma}[theorem]{Lemma}
\newtheorem{corollary}[theorem]{Corollary}
\newtheorem{proposition}[theorem]{Proposition}
\theoremstyle{definition}
\newtheorem{example}[theorem]{Example}
\newtheorem{remark}[theorem]{Remark}
\begin{document}

\title[Strictly singular non-compact operators between $L_p$ spaces]{Strictly singular non-compact operators between $L_p$ spaces}

\author[F. L. Hern\'{a}ndez]{Francisco L. Hern\'{a}ndez}
\address{F. L. Hern\'{a}ndez\\ IMI \& Departamento de An\'{a}lisis Matem\'{a}tico y Matem\'atica Aplicada, Universidad Complutense de Madrid, 28040, Madrid, Spain.}
\email{pacoh@ucm.es}

\author[E. M. Semenov]{Evgeny M. Semenov}
\address{E. M. Semenov\\ Department of Mathematics, Voronezh State University, Voronezh 394006 (Russia).}
\email{nadezhka\_ssm@geophys.vsu.ru}

\author[P. Tradacete]{Pedro Tradacete}
\address{P. Tradacete\\ Instituto de Ciencias Matem\'aticas (CSIC-UAM-UC3M-UCM)\\
Consejo Superior de Investigaciones Cient\'ificas\\
C/ Nicol\'as Cabrera, 13--15, Campus de Cantoblanco UAM\\
28049 Madrid, Spain.}
\email{pedro.tradacete@icmat.es}

\thanks{Research partially supported by Agencia Estatal de Investigaci\'on (AEI) and Fondo Europeo de Desarrollo Regional (FEDER) through grants MTM2016-76808-P (AEI/FEDER, UE) and MTM2016-75196-P(AEI/FEDER, UE) as well as Grupo UCM 910346. E. Semenov was supported by Russian grant RFBR 18-01-00414. P. Tradacete gratefully acknowledges support of Spanish Ministerio de Econom\'{\i}a, Industria y Competitividad through the ``Severo Ochoa Programme for Centres of Excellence in R\&D'' (SEV-2015-0554).}

\subjclass[2010]{47B07, 46E30, 28A78}

%%47B07 operators defined by compactness properties
%%46E30 Spaces of measurable functions ($L^p$-spaces...)
%%28A78 Hausdorff and packing measures

\keywords{Strictly singular operator; $L_p$ spaces; $L$-characteristic set; Riesz potential operator; Ahlfors regular space}

\maketitle

\begin{abstract}
We study the structure of strictly singular non-compact operators between $L_p$ spaces. Answering a question raised in \cite{HST17}, it is shown that there exist operators $T$, for which the set of points $(\frac1p,\frac1q)\in(0,1)\times (0,1)$ such that $T:L_p\rightarrow L_q$ is strictly singular but not compact contains a line segment in the triangle $\{(\frac1p,\frac1q):1<p<q<\infty\}$ of any positive slope. This will be achieved by means of Riesz potential operators between metric measure spaces with different Hausdorff dimension. The relation between compactness and strict singularity of regular operators defined on subspaces of $L_p$ is also explored.
\end{abstract}

\section{Introduction}

The purpose of this paper is to continue our recent research in \cite{HST17} about the relation between strict singularity and compactness for operators defined on the scale of $L_p$ spaces, and in particular, answer a question concerning the shape of the so-called $V$-characteristic sets, which consists of those points $(\frac1p,\frac1q)\in(0,1)\times (0,1)$ such that an operator $T:L_p\rightarrow L_q$ is strictly singular but not compact.

Recall that an operator between Banach spaces is \emph{strictly singular} provided it is not invertible when restricted to any (closed) infinite dimensional subspace. The class of strictly singular operators forms a closed two-sided operator ideal, containing that of compact operators, and was introduced by T. Kato \cite{Kato} in connection with the perturbation theory of Fredholm operators. Although strict singularity is a purely infinite-dimensional notion, the spectral theory for this class of operators coincides with that for compact operators (cf. \cite{AB}). On the other hand, strictly singular operators exhibit in general a different behaviour concerning duality \cite{W} and interpolation properties \cite{Beucher,Heinrich}.

J. Calkin \cite{C} noted that on Hilbert spaces there is only one non-trivial closed ideal, hence for an operator $T:L_2\rightarrow L_2$ strict singularity is equivalent to compactness. This fact was later extended by H. R. Pitt for operators $T:\ell_p\rightarrow \ell_q$ for $1\leq q\leq p<\infty$ (cf. \cite[Proposition 2.c.3]{LT1}). Among the simplest examples of strictly singular non-compact operators one should mention the formal inclusion
$$
i_{p,q}:\ell_p\hookrightarrow \ell_q
$$
when $1\leq p<q\leq\infty$. For a general Banach space, exhibiting instances of strictly singular non-compact operators can be very non-trivial (cf. \cite{AS}).

In this paper, we will deal with operators defined on $L_p$ spaces over finite measure spaces. Given $1\leq p,q\leq \infty$, let us denote by $L(L_p,L_q)$ the space of bounded linear operators $T:L_p\rightarrow L_q$, and $K(L_p,L_q)$ (respectively, $S(L_p,L_q)$ ) the ideal of compact (respectively, strictly singular) operators. For an operator $T:L_\infty\rightarrow L_1$, let us consider the characteristic sets
\begin{eqnarray*}
L(T)=\Big\{\Big(\frac1p,\frac1q\Big)\in(0,1)\times(0,1): T\in L(L_p,L_q)\Big\} & &\emph{($L$-characteristic)}, \\
K(T)=\Big\{\Big(\frac1p,\frac1q\Big)\in(0,1)\times(0,1): T\in K(L_p,L_q)\Big\} & &\emph{($K$-characteristic)}.
\end{eqnarray*}
These sets were introduced by M. A. Krasnoselskii and P. Zabreiko in \cite{ZK} and thoroughly analyzed in the monograph \cite{KZPS}. 

It is easy to see that $L(T)$ and $K(T)$ are monotone sets, in the sense that if a point $(\alpha_0,\beta_0)$ belongs to the set, then the upper-left corner
$$
\{(\alpha,\beta):0<\alpha\leq \alpha_0, \beta_0\leq \beta< 1\}
$$
is also contained in the set.

The classical Riesz-Thorin interpolation theorem tells us that $L(T)$ is a convex set, while Krasnoselskii's interpolation theorem \cite{Krasnoselskii} yields that $K(T)$ is convex as well. S. Riemenschneider showed in  \cite{R} that $L(T)$ is always an $F_\sigma$ set, and in fact charaterized those sets arising as the characteristic set $L(T)$ for some operator $T$ as precisely the convex, monotone $F_\sigma$ subsets of $(0,1)\times(0,1)$.

 In \cite{HST17}, motivated by the study of the interpolation properties of strictly singular operators on $L_p$ spaces, we focused on the \emph{$S$-characteristic set}
$$
S(T)=\Big\{\Big(\frac1p,\frac1q\Big)\in(0,1)\times(0,1): T\in S(L_p,L_q)\Big\},
$$
and in particular, on the set
$$
V(T)=S(T)\backslash K(T),
$$
which we will call the \emph{$V$-characteristic set} of the operator $T$. Note that $S$-characteristic sets are also monotone and convex (this follows from \cite[Theorem 21]{HST17}, see also Theorem \ref{teo:SSint} below).

For all known examples where $V(T)$ had been described, it consisted of (possibly degenerate) line segments in $(0,1)\times(0,1)$ which were vertical, horizontal or parallel to the diagonal. It was left as an open question in \cite{HST17} whether this was always the case. One of the purposes of this paper is to answer this question in the negative by exhibiting examples of operators whose corresponding set $V(T)$ can contain a line segment with any positive slope. It is relevant to note that the operators $T$ considered in \cite{R}, which allow to construct all possible $L$-characteristic sets, have the property that $K(T)=L(T)$, and thus $V(T)=\emptyset$. The examples that will be used here will be constructed by means of certain Riesz potential integral operators acting on $L_p$ spaces over Ahlfors regular metric measure spaces of different Hausdorff dimensions.

The paper is organized as follows. After some preliminaries on strictly singular and compact operators on $L_p$ spaces and more general Banach lattices, together with some facts about interpolation properties and geometric measure theory, in Section \ref{s:Vcharacteristic}, we will construct examples of operators with a variety of $V$-characteristic sets. In particular, we will show that if the $V$-characteristic set intersects the upper triangle $\{(\alpha,\beta):0<\alpha< \beta<1\}$, then it must do so in a vertical or horizontal segment (see Proposition \ref{prop:V_uppertriangle}), and that for any line segment $\ell$ contained in the lower triangle $\{(\alpha,\beta):0< \beta<\alpha<1\}$, there is an operator whose $V$-characteristic set contains $\ell$ (Theorem \ref{mainline}). Finally, motivated by the fact that positive strictly singular endomorphisms on $L_p$ are necessarily compact \cite{CG}, in the final section of the paper we will consider the case of regular operators defined on subspaces of $L_p$.

\section{Preliminaries}

Before we analyze the structure of $V$-characteristic sets, let us begin recalling some general facts related to strict singularity and compactness for operators on Banach lattices which will be helpful in our context.

Let $E$ be a Banach lattice, $X$ a Banach space,  and let $T:E\rightarrow X$ be an operator. We say $T$ is  \emph{AM-compact} when $T[-x,x]$ is a relatively compact set for every $x\in E_+$, where
$$
[-x,x]=\{y\in E:-x\leq y\leq x\}
$$
denotes the order interval generated by $x$. An operator $T:E\rightarrow X$ is called \emph{$M$-weakly compact} when $\|Tx_n\|\rightarrow0$ for every sequence $(x_n)$ of pairwise disjoint normalized vectors in $E$. Finally, we will say that an operator $T:E\rightarrow X$ is \emph{disjointly strictly singular} if for any sequence $(x_n)$ of pairwise disjoint vectors in $E$, the restriction
$$
T|_{[x_n]}:[x_n]\rightarrow X
$$
is not topologically invertible (here $[x_n]$ denotes the closed linear span of the sequence $(x_n)$).

It is well-known that an operator on a Banach lattice $T:E\rightarrow X$ is compact if and only if it is both AM-compact and $M$-weakly compact (cf. \cite[Proposition 3.7.4]{MN}). On the other hand, if $E$ is a Banach lattice  with finite cotype then an operator $T:E\rightarrow X$ is strictly singular if and only if it is both AM-compact and disjointly strictly singular \cite[Theorem 2.4]{FHKT}. In other words, in order to distinguish compactness from strict singularity on Banach lattices, one can reduce the focus to the behavior of pairwise disjoint sequences of vectors.

Throughout, unless otherwise stated, $L_p$ will denote $L_p(0,1)$ equipped with Lebesgue measure. For endomorphisms on $L_p$ spaces, the above characterization of strict singularity can also be expressed as follows: Given Banach spaces $X,Y,Z$, let us say that an operator $T:X\rightarrow Y$ is \emph{$Z$-singular} if it is never invertible when restricted to a subspace of $X$ isomorphic to $Z$; for $1< p<\infty$, an operator $T:L_p\rightarrow L_p$  is strictly singular if and only if it is both $\ell_2$-singular and $\ell_p$-singular \cite{M,Weis:77}.

To sum up this discussion, let us mention the following result given in \cite[Proposition 8]{HST17}: Suppose $T:L_p\rightarrow L_q$ for $2<q\leq p<\infty$ is strictly singular and not compact, then there exists a normalized sequence $(y_k)$ in $L_p$, which is equivalent to the unit vector basis of $\ell_2$, whose span $[y_k]$ is complemented, and such that $(Ty_k)$ is equivalent to the unit vector basis of $\ell_q$.

The fact that the inclusion $$I_{p,q}:L_p\hookrightarrow L_q$$ is continuous for $q<p$, together with the ideal property, yield that $L(T)$, $K(T)$ and $S(T)$ are monotone subsets of $(0,1)\times(0,1)$.

Convexity of $S(T)$ is equivalent to the interpolation property of strictly singular operators between $L_p$ spaces given in  \cite[Theorem 21]{HST17}. In order to recall this result, given $1\leq p_0,p_1,q_0,q_1\leq \infty$, for each $\theta\in(0,1)$ let
$$
\frac{1}{p_\theta}=\frac{\theta}{p_0}+\frac{1-\theta}{p_1}\,\,\textrm{ and }\,\,\frac{1}{q_\theta}=\frac{\theta}{q_0}+\frac{1-\theta}{q_1}.
$$

\begin{theorem}\label{teo:SSint}
Let $1\leq p_0,p_1,q_0,q_1<\infty$. If an operator $T:L_{p_0}\rightarrow L_{q_0}$ is strictly singular and $T:L_{p_1}\rightarrow L_{q_1}$ is bounded, then $T:L_{p_\theta}\rightarrow L_{q_\theta}$ is strictly singular for each $\theta\in(0,1)$.
\end{theorem}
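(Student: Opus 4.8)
The plan is to reduce the statement, via the characterization of strict singularity on Banach lattices of finite cotype recalled above, to two separate interpolation assertions: one for AM-compactness and one for disjoint strict singularity. Since $L_{p_\theta}(0,1)$ has finite cotype for every $1\le p_\theta<\infty$, the operator $T:L_{p_\theta}\to L_{q_\theta}$ will be strictly singular as soon as it is both AM-compact and disjointly strictly singular; and by the same token the hypothesis that $T:L_{p_0}\to L_{q_0}$ is strictly singular already furnishes AM-compactness (via \cite[Theorem 2.4]{FHKT}) and disjoint strict singularity (trivially, this being a restriction to special subspaces) at the endpoint $\theta=0$. Throughout I will use that $T:L_{p_\theta}\to L_{q_\theta}$ is bounded, which is Riesz--Thorin (convexity of $L(T)$), and the elementary log-convexity inequality $\|f\|_{r_\theta}\le\|f\|_{r_0}^\theta\|f\|_{r_1}^{1-\theta}$, valid whenever $1/r_\theta=\theta/r_0+(1-\theta)/r_1$.

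For AM-compactness I would argue by truncation. Fix $x\in (L_{p_\theta})_+$ and set $x_m=x\wedge m$; since $p_\theta<\infty$ one has $\|x-x_m\|_{p_\theta}\to0$, and every $y$ with $|y|\le x$ splits as $y=y_1+y_2$ with $|y_1|\le x_m$ and $|y_2|\le x-x_m$, so that $T[-x,x]\subseteq T[-x_m,x_m]+T[-(x-x_m),x-x_m]$. The second summand lies in a ball of $L_{q_\theta}$ of radius $2\|T\|_{L_{p_\theta}\to L_{q_\theta}}\|x-x_m\|_{p_\theta}\to0$. For the first, $x_m\in L_\infty\subseteq L_{p_0}\cap L_{p_1}$, so AM-compactness at the endpoint $0$ makes $T[-x_m,x_m]$ relatively compact in $L_{q_0}$, while boundedness at the endpoint $1$ makes it bounded in $L_{q_1}$; the log-convexity inequality (together with Fatou's lemma to place limits in $L_{q_1}$) then upgrades $L_{q_0}$-convergence of sequences in $T[-x_m,x_m]$ to $L_{q_\theta}$-convergence, so $T[-x_m,x_m]$ is relatively compact in $L_{q_\theta}$. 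Thus $T[-x,x]$ is totally bounded, hence relatively compact, in $L_{q_\theta}$.

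The heart of the matter is disjoint strict singularity. Arguing by contradiction, suppose there is a disjoint normalized sequence $(x_n)$ in $L_{p_\theta}$ with $T|_{[x_n]}$ bounded below. Since disjoint normalized functions are isometrically equivalent to the unit vector basis of $\ell_{p_\theta}$, and the very same functions are disjoint in $L_{p_0}$ and in $L_{p_1}$, I would renormalize in $L_{p_0}$, setting $u_n=x_n/\|x_n\|_{p_0}$, and use disjoint strict singularity of $T:L_{p_0}\to L_{q_0}$ to extract, by a gliding-hump argument, disjoint block vectors $v_k=\sum_{n\in I_k}c_n u_n$ with $\|v_k\|_{p_0}=1$ and $\|Tv_k\|_{q_0}\to0$. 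Writing the $v_k$ back in terms of the $x_n$ and invoking boundedness of $T:L_{p_1}\to L_{q_1}$ to control $\|Tv_k\|_{q_1}$ by $\|v_k\|_{p_1}$, the log-convexity inequality $\|Tv_k\|_{q_\theta}\le\|Tv_k\|_{q_0}^\theta\|Tv_k\|_{q_1}^{1-\theta}$ should force $\|Tv_k\|_{q_\theta}/\|v_k\|_{p_\theta}\to0$, contradicting that $T$ is bounded below on $[x_n]\supseteq[v_k]$.

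The main obstacle is exactly this last step: making the product $\|Tv_k\|_{q_0}^\theta\|Tv_k\|_{q_1}^{1-\theta}$ decay faster than $\|v_k\|_{p_\theta}$. This requires controlling three different $L_p$-norms of the same blocks simultaneously---keeping $\|v_k\|_{p_1}$ (hence $\|Tv_k\|_{q_1}$) from growing too fast while keeping $\|v_k\|_{p_\theta}$ bounded below---and the relevant estimates depend on the position of $p_\theta$ relative to $p_0,p_1$ and on the concentration of the $x_n$ (whose supports necessarily shrink in the finite measure space). I would handle this by first passing to a subsequence along which the ratios $\|x_n\|_{p_1}/\|x_n\|_{p_0}$ and the support measures converge, reducing to a few normalized profiles (peaked versus flat), and then choosing the blocks $I_k$ so that the gain $\|Tv_k\|_{q_0}\to0$ coming from the endpoint $0$ dominates. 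This bookkeeping, rather than the conceptual framework, is where the real work lies.
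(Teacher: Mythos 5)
Your reduction to ``AM-compact $+$ disjointly strictly singular'' via \cite[Theorem 2.4]{FHKT} is legitimate (all exponents are finite, so every $L_{p}$ involved has finite cotype, and the hypothesis at the endpoint $\theta=0$ does yield both properties there), and your truncation argument for the AM-compact half is essentially complete: $T[-x\wedge m,x\wedge m]$ is a single set of functions, relatively compact in $L_{q_0}$ and bounded in $L_{q_1}$, and applying $\|f\|_{q_\theta}\le\|f\|_{q_0}^\theta\|f\|_{q_1}^{1-\theta}$ to differences upgrades this to relative compactness in $L_{q_\theta}$; the $\varepsilon$-net argument then finishes. (For the record, the present paper does not reprove this statement; it quotes it from \cite[Theorem 21]{HST17}, whose proof is built on the same two-ingredient framework you chose.)

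The disjoint-strict-singularity half, however, contains a genuine gap, and it sits exactly where you wrote that ``the real work lies.'' The DSS hypothesis at the endpoint $0$ produces blocks $v_k$ normalized in $L_{p_0}$ with $\|Tv_k\|_{q_0}\to 0$, but gives no control of $\|v_k\|_{p_1}$ relative to $\|v_k\|_{p_\theta}$, and your concluding estimate $\|Tv_k\|_{q_\theta}\le\|Tv_k\|_{q_0}^{\theta}\,(\|T\|\,\|v_k\|_{p_1})^{1-\theta}$ beats $\|v_k\|_{p_\theta}$ only when the ratio $\rho(v_k)=\|v_k\|_{p_0}^{\theta}\|v_k\|_{p_1}^{1-\theta}/\|v_k\|_{p_\theta}$ stays bounded. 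By log-convexity $\rho\ge 1$ always; it equals $1$ for single-level (indicator-type) functions, which is why the estimate closes in that case, but it tends to infinity for disjoint $x_n$ that mix scales --- say $p_0<p_\theta<p_1$ and $x_n$ is a tall thin spike (carrying the $p_1$-mass, with $\|x_n\|_{p_1}\to\infty$ arbitrarily fast) glued to a flatter piece (carrying the $p_0$-mass), each of $p_\theta$-norm $\tfrac12$. Note moreover that on a finite measure space disjointness already forces $\|x_n\|_{p_1}\to\infty$ whenever $p_1>p_\theta$, so the favourable ``flat'' regime cannot simply be arranged by passing to a subsequence where ratios converge: the blocks witnessing DSS at the endpoint $0$ may be precisely the spiky ones for which $\rho(v_k)\to\infty$, and then the product bound proves nothing. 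Closing this case requires an additional idea --- a Kadec--Pelczynski-type splitting \cite{KP} of each $x_n$ into an equi-integrable part and a part supported on sets of vanishing measure, with the concentrated parts handled by an argument that does not rest on the endpoint-$0$ DSS hypothesis alone. As it stands, your proof establishes the easy half and a special case of the hard half, but not the theorem.
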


Note that the assumption that the indices are finite in Theorem \ref{teo:SSint} is essential: Take the formal inclusion operator which satisfies that $T:L_\infty\rightarrow L_2$ is strictly singular (by Grothendieck's theorem), and $T:L_1\rightarrow L_1$ is bounded, but 
$$
T:L_p\rightarrow L_{\frac{2p}{p+1}}
$$ 
is not strictly singular for any $1<p<\infty$ (because of Kintchine's inequality).

Under some extra assumptions on the position of the interpolation segment
$$
\Big\{\theta\Big(\frac{1}{p_0},\frac{1}{q_0}\Big)+(1-\theta)\Big(\frac{1}{p_1},\frac{1}{q_1}\Big):\theta\in[0,1]\Big\}
$$
 in $(0,1)\times(0,1)$, one actually obtains a stronger property for the interpolated operator. We refer to this as a compact extrapolation result (see \cite[Theorem 13]{HST17}):

\begin{theorem}\label{teo:SSext}
Let $1<p_i,q_i<\infty$ for $i=0,1$ with  $q_0\neq q_1$ , $p_0\neq p_1$. Suppose either
\begin{itemize}
\item $\min\{\frac{q_0}{p_0},\frac{q_1}{p_1}\}\leq 1$, or
\item $\min\{\frac{q_0}{p_0},\frac{q_1}{p_1}\}> 1$ and $\frac{q_1-q_0}{p_1-p_0}<0$.
\end{itemize}
If $T$ is a bounded operator from $L_{p_i}$ to $L_{q_i}$ for $i=0,1$, and for some $0 < \theta <1 $, $T\in S(L_{p_{\theta}},L_{q_{\theta}})$, then $T\in K(L_{p_\tau},L_{q_\tau})$ for every $\tau\in(0,1)$.
\end{theorem}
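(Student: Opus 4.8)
The plan is to argue by contradiction via the disjoint-sequence characterizations, reducing everything to $M$-weak compactness, and then splitting into the two hypotheses: the negative-slope case will fall to a direct boundedness estimate, while the case where an endpoint lies in the closed upper triangle will require the full strength of strict singularity.

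First I would set up the reduction. By Theorem \ref{teo:SSint}, strict singularity at $(\tfrac1{p_\theta},\tfrac1{q_\theta})$ together with boundedness at the endpoints propagates to $T\in S(L_{p_\tau},L_{q_\tau})$ for every $\tau\in(0,1)$. Since $1<p_\tau<\infty$, the lattice $L_{p_\tau}$ has finite cotype, so by the quoted characterization $T:L_{p_\tau}\to L_{q_\tau}$ is both AM-compact and disjointly strictly singular. As an operator is compact exactly when it is AM-compact and $M$-weakly compact, it suffices to prove $M$-weak compactness at each $\tau$. So suppose it fails at some $\tau$: after passing to a subsequence there is a pairwise disjoint normalized sequence $(x_n)$ in $L_{p_\tau}$ with $\|Tx_n\|_{q_\tau}\geq\delta>0$; by disjointness in the finite measure space, $|\mathrm{supp}\,x_n|\to0$, and $[x_n]$ is isometric to $\ell_{p_\tau}$.

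Under the second hypothesis I expect a clean, direct contradiction that uses only the negative slope. Relabel so that $p_0<p_1$; then $\frac{q_1-q_0}{p_1-p_0}<0$ forces $q_1<q_0$, and since $\tau$ is interior we have $p_0<p_\tau$ and $q_\tau<q_0$. By H\"older's inequality on the shrinking support, $\|x_n\|_{p_0}\leq|\mathrm{supp}\,x_n|^{\frac1{p_0}-\frac1{p_\tau}}\|x_n\|_{p_\tau}\to0$, whence $\|Tx_n\|_{q_0}\leq\|T\|_0\|x_n\|_{p_0}\to0$; as $q_\tau<q_0$ on a probability space, $\|Tx_n\|_{q_\tau}\leq\|Tx_n\|_{q_0}\to0$, contradicting $\|Tx_n\|_{q_\tau}\geq\delta$. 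Note that strict singularity enters here only through AM-compactness: the $M$-weak compactness is forced by boundedness and the negative slope alone.

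Under the first hypothesis the H\"older shortcut can fail (for instance for a segment of positive slope), and strict singularity must be used essentially. Here I would apply the subsequence splitting (Kadec--Pe\l czy\'nski) lemma to the weakly null sequence $(Tx_n)$ in $L_{q_\tau}$, obtaining $Tx_n=g_n+h_n$ with $(h_n)$ pairwise disjoint and $(g_n)$ equi-integrable: a subsequence of $(Tx_n)$ is then equivalent to the unit vector basis of $\ell_{q_\tau}$ (disjoint part dominant) or behaves like the unit vector basis of $\ell_2$ (equi-integrable part dominant). Transporting the situation to the endpoint with $q_0\leq p_0$ and renormalizing there, I would realize the restriction as a bounded operator between sequence spaces and invoke Pitt's theorem (\cite{LT1}): a bounded map $\ell_{p_0}\to\ell_{q_0}$ with $q_0<p_0$ is compact, which forces the images to vanish and contradicts $\|Tx_n\|_{q_\tau}\geq\delta$; the diagonal boundary $q_0=p_0$ together with the $\ell_2$-regime, where Pitt yields nothing, would be dispatched using strict singularity directly through $\ell_2$-singularity, in the spirit of Calkin's theorem and Proposition~8 of \cite{HST17}.

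I expect the main obstacle to be precisely this first case. The difficulty is to coordinate the subsequence splitting so that the disjoint-versus-equi-integrable dichotomy transports consistently to the chosen upper-triangle endpoint, while keeping control of the orderings of $p_0,p_\tau$ and of $q_0,q_\tau$ (which depend on the slope and are not fixed by the hypothesis) so that the renormalized images remain bounded below; and to handle the borderline configuration $q_0=p_0$ together with the $\ell_2$-alternative, where compactness cannot come from Pitt's theorem and one must exploit strict singularity of the restricted operator itself. The second hypothesis is comparatively harmless precisely because it bypasses all of this via the single boundedness estimate above.
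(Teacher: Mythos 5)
Your reduction and your treatment of the second hypothesis are sound: Theorem \ref{teo:SSint}, applied between $\theta$ and each endpoint, does give $T\in S(L_{p_\tau},L_{q_\tau})$ for all $\tau$, hence AM-compactness, and for a negative-slope segment the H\"older estimate $\|x_n\|_{p_0}\leq\mu(\mathrm{supp}\,x_n)^{\frac{1}{p_0}-\frac{1}{p_\tau}}$ together with $\|\cdot\|_{q_\tau}\leq\|\cdot\|_{q_0}$ correctly forces $M$-weak compactness from boundedness at the endpoints alone. Be aware that the present paper does not prove this statement but quotes it from \cite[Theorem 13]{HST17}; note also that your second case admits a one-line alternative from the paper's own toolkit, since by monotonicity of $L(T)$ every interior point of a negative-slope segment lies in the interior of $L(T)$, so \eqref{V(T)boundary} excludes it from $V(T)$.

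The first hypothesis, however, is where the substance of the theorem lies, and there your text is a plan with self-acknowledged obstacles rather than a proof; this is a genuine gap, and the specific route you sketch has concrete obstructions. First, a normalized disjoint sequence $(x_n)\subset L_{p_\tau}$ witnessing failure of $M$-weak compactness need not belong to $L_{p_i}$ when $p_i>p_\tau$, and on a positive-slope segment one endpoint always has $p_i>p_\tau$; if that is the endpoint with $q_i\leq p_i$, the ``transport and renormalize at the endpoint'' step cannot even be set up. Second, Pitt's theorem concerns operators $\ell_p\to\ell_q$, whereas $T|_{[x_n]}$ maps a copy of $\ell_{p_\tau}$ into $L_{q_\tau}$, not into a copy of $\ell_{q_0}$, so it gives nothing until the image sequence has been identified --- which is the whole difficulty. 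Third, the Kadec-Pelczynski dichotomy you invoke ($\ell_{q_\tau}$ versus $\ell_2$) requires $q_\tau\geq 2$, while hypothesis (1) allows segments lying entirely in the region $q_\tau<2$ (e.g.\ $(p_0,q_0)=(\frac32,\frac65)$, $(p_1,q_1)=(\frac{19}{10},\frac95)$ has positive slope and $\min\{q_0/p_0,q_1/p_1\}\leq1$), where an equi-integrable weakly null sequence need not be equivalent to the $\ell_2$ basis. The argument in \cite{HST17} does not try to prove $M$-weak compactness at a fixed $\tau$ in this case: it establishes compactness at a single interior point --- using the coincidence $S=K$ for $q\leq2\leq p$ recorded in \eqref{V(T)notinsquare}, the structural Proposition~8 of \cite{HST17} quoted in the Preliminaries for the range $2<q\leq p$, and duality --- and then propagates compactness to the whole open segment by Krasnoselskii's compactness interpolation theorem. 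Some such change of strategy is needed to close your first case.
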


These interpolation results can be useful in studying the rigidity of composition and Volterra-type operators in Hardy $H^p$ spaces (see \cite{LNST, MNST}).

It should be noted that $K(T)$ and $S(T)$ could be empty sets when $L(T)$ is not empty: take for instance the formal inclusion $J:L_\infty\hookrightarrow L_1$, for which
$$
L(J)=\Big\{\Big(\frac1p,\frac1q\Big):1<q\leq p<\infty\Big\}.
$$
More generally, if $g\in L_r$ and we define the multiplication operator $T_g:L_\infty\rightarrow L_1$ given by $T_g(f)=fg$, it is easy to check that
$$
L(T_g)=\Big\{\Big(\frac1p,\frac1q\Big):\frac1q\geq\frac1p+\frac1r\Big\},
$$
while
$$
K(T_g)=S(T_g)=\emptyset.
$$

In our previous work \cite[Theorem 18]{HST17}, it is shown that for $\alpha\in(0,1)$, the averaging operator of the form
$$
S_\alpha f=\sum_{k\in\mathbb N}\Big(\mu(A_k)^{\alpha-1}\int_{A_k} fd\mu\Big)\chi_{A_k},
$$
where $(A_k)_{k\in\mathbb N}$ is a sequence of pairwise disjoint measurable sets in $(0,1)$ with Lebesgue measure $\mu(A_k)>0$, satisfy that
$$
V(S_\alpha)=\Big\{\Big(\frac1p,\frac1q\Big):\frac{1}{q}=\frac{1}{p}-\alpha>0\Big\}.
$$

Similarly, for the Riemann-Liouville integral operator $R_\alpha$, for $0<\alpha<1$, defined by
$$
R_\alpha f(t)=\frac{1}{\Gamma(\alpha)}\int_0^t f(u)(t-u)^{\alpha-1} du
$$
it can be checked (cf. \cite{BL}). that $V(R_\alpha)$ coincides with a line segment parallel to the diagonal:
$$
V(R_\alpha)=\Big\{\Big(\frac1p,\frac1q\Big):\frac{1}{q}=\frac{1}{p}-\alpha>0\Big\}.
$$

In our search for operators with $V$-characteristic sets containing line segments which are not parallel to the diagonal we will need to consider a different approach.

As a consequence of the above results, one easily gets the following

\begin{corollary}\label{c:interiorequal}
If $S(T)\not= \emptyset$, then $S(T)$, $K(T)$ and $L(T)$ have the same interior.
\end{corollary}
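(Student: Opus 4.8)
The plan is to prove the two non-trivial inclusions of interiors. The ideal containment $K(T)\subseteq S(T)\subseteq L(T)$, together with the monotonicity and convexity of all three sets, immediately gives $\operatorname{int}K(T)\subseteq \operatorname{int}S(T)\subseteq\operatorname{int}L(T)$, so everything reduces to showing $\operatorname{int}L(T)\subseteq K(T)$; once this is known, openness of $\operatorname{int}L(T)$ yields $\operatorname{int}L(T)\subseteq\operatorname{int}K(T)$ and all three interiors coincide. If $\operatorname{int}L(T)=\emptyset$ there is nothing to prove, so I fix a point $P=(\frac1p,\frac1q)$ in $\operatorname{int}L(T)$ and aim to show $T\in K(L_p,L_q)$. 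Note that the hypothesis $S(T)\neq\emptyset$ will be used precisely to propagate strict singularity into the interior of $L(T)$; it cannot be dropped, since the inclusion $J:L_\infty\hookrightarrow L_1$ has $\operatorname{int}L(J)\neq\emptyset$ but $S(J)=\emptyset$.

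First I would establish that $T$ is strictly singular at every interior point of $L(T)$. Choosing any $P_0\in S(T)$ and using that $P$ is interior, I extend the segment from $P_0$ through $P$ slightly beyond $P$ to a point $P_1\in\operatorname{int}L(T)$, so that $P=\theta P_0+(1-\theta)P_1$ for some $\theta\in(0,1)$. Since $T$ is strictly singular at $P_0$ and bounded at $P_1$, and all three points lie in $(0,1)\times(0,1)$ (hence have finite indices $1<p_i,q_i<\infty$), the interpolation result Theorem \ref{teo:SSint} gives that $T$ is strictly singular at $P=P_\theta$. Hence $\operatorname{int}L(T)\subseteq S(T)$.

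Next I would upgrade strict singularity to compactness at $P$ by invoking the extrapolation result Theorem \ref{teo:SSext}. The idea is to run a short segment through $P$, lying inside $\operatorname{int}L(T)$ and having $P$ as its midpoint, whose two endpoints satisfy the geometric hypotheses of that theorem. In the coordinates $(\frac1p,\frac1q)$, the condition $q_i/p_i\leq 1$ means the endpoint lies on or above the diagonal, while $\frac{q_1-q_0}{p_1-p_0}<0$ is equivalent to the segment having negative slope. Accordingly, if $P$ lies on or above the diagonal I take the segment parallel to the diagonal, so that both endpoints satisfy $q_i/p_i\leq 1$ and the first alternative of Theorem \ref{teo:SSext} holds; if $P$ lies strictly below the diagonal I take a short segment of negative slope remaining in the lower triangle, so that both endpoints satisfy $q_i/p_i>1$ with negative slope and the second alternative holds. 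In either case $T$ is bounded at the two endpoints and, by the previous step, strictly singular at the midpoint $P$, so Theorem \ref{teo:SSext} yields $T\in K(L_{p_\tau},L_{q_\tau})$ for all $\tau$, and in particular $T\in K(L_p,L_q)$. This proves $\operatorname{int}L(T)\subseteq K(T)$ and completes the argument.

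The main obstacle I anticipate is the bookkeeping in this last step: one must verify that through an \emph{arbitrary} interior point of $L(T)$ there is always a non-degenerate segment (neither horizontal nor vertical, with $1<p_i,q_i<\infty$) fitting one of the two alternatives of Theorem \ref{teo:SSext}, which forces the case split according to the position of $P$ relative to the diagonal. Translating the ratios $q_i/p_i$ and the sign of $\frac{q_1-q_0}{p_1-p_0}$ into the diagonal/slope picture in the $(\frac1p,\frac1q)$ coordinates is where care is needed, although the estimates involved are elementary.
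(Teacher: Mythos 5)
Your argument is correct and is precisely the one the paper intends: the corollary is stated as an immediate consequence of Theorems \ref{teo:SSint} and \ref{teo:SSext}, and your two steps (propagating strict singularity from a point of $S(T)$ into $\operatorname{int}L(T)$ via Theorem \ref{teo:SSint}, then upgrading to compactness via a short segment through the point chosen according to its position relative to the diagonal so as to satisfy one of the two alternatives of Theorem \ref{teo:SSext}) supply exactly the details the paper leaves to the reader. The translation of the conditions $q_i/p_i\leq 1$ and $\frac{q_1-q_0}{p_1-p_0}<0$ into the $(\frac1p,\frac1q)$-picture is also correct, since the sign of the slope is preserved under the change of coordinates.
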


In particular, we have
\begin{equation}\label{V(T)boundary}
V(T)\subseteq \partial L(T)
\end{equation}
(cf. \cite[Theorem 9]{HST17}), where $\partial L(T)$ denotes the boundary of $L(T)$ in $(0,1)\times (0,1)$. A similar fact holds in the particular case when $T$ is an integral operator, in which case one has $L(T)\backslash K(T)\subseteq\partial L(T)$ \cite[Theorem 5.14]{KZPS}.

The fact mentioned above that, for endomorphisms on $L_2$, compactness is the same as strict singularity, can be extended to operators $T:L_p\rightarrow L_q$ as long as $1<q\leq 2\leq p<\infty$ \cite[Theorem 5]{HST17}. In other words, we have that
\begin{equation}\label{V(T)notinsquare}
V(T)\cap \big\{(\alpha,\beta):0<\alpha\leq\frac12\leq \beta<1\big\}=\emptyset.
\end{equation}

Finally, let us recall the following symmetric property of the $V$-characteristic set. Let $\phi:(0,1)\times (0,1)\rightarrow (0,1)\times (0,1)$ be the involution given by
$$
\phi(\alpha,\beta)=(1-\beta,1-\alpha).
$$
This map is related to duality by the following property: for any operator $T:L_\infty\rightarrow L_1$ we have $(\alpha,\beta)\in L(T)$ if and only if $\phi(\alpha,\beta)\in L(T^*)$. Let
$$
R=\{(\alpha,\beta)\in(0,1)\times(0,1):\alpha<\beta<\frac12\}.
$$
From \cite[Theorem 7]{HST17}, one can deduce that
\begin{equation}\label{V(T)dual}
\phi(V(T)\cap R^c)\subset V(T^*).
\end{equation}

The following interpolation result due to E. Stein and G. Weiss (cf. \cite[Theorem IV.5.5]{BS}) will be particularly useful in the next section. Recall that for $1\leq p<\infty$, the space $L_{p,\infty}(\mu)$ consists of all measurable functions for which the following expression is finite
$$
\|f\|_{p,\infty}=\sup_{\lambda>0}\lambda\big[\mu(\{x:|f(x)|\geq\lambda\})\big]^{\frac1p}.
$$

\begin{theorem}\label{Stein-Weiss}
Let $1\leq p_0\leq q_0\leq\infty$, $1\leq p_1\leq q_1\leq\infty$, $p_0\neq p_1$, $q_0\neq q_1$ and $T:L_\infty(\nu)\rightarrow L_1(\mu)$  an operator such that for some $C_0,C_1>0$,
$$
\|T\chi_A\|_{q_i,\infty}\leq C_i \nu(A)^{\frac{1}{p_i}},
$$
for every measurable set $A$, and $i=0,1$. Then for each $\theta\in(0,1)$, the operator $T:L_{p_\theta}(\nu)\rightarrow L_{q_\theta}(\mu)$ is bounded.
\end{theorem}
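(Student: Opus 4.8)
The plan is to view Theorem~\ref{Stein-Weiss} as an instance of \emph{restricted weak type} interpolation and to reduce it to a single pointwise estimate for decreasing rearrangements. Throughout write $p=p_\theta$ and $q=q_\theta$, so that $1\le p\le q$; this inequality, inherited from $p_i\le q_i$, will be decisive only at the very end. Since $p_0\neq p_1$ forces $p<\infty$, simple functions are dense in $L_p(\nu)$, and as every simple function already lies in $L_\infty(\nu)$, where $T$ is defined, it suffices to prove an a priori bound $\|Tf\|_q\le C\|f\|_p$ for simple $f$ and then extend $T$ by continuity. The hypothesis $\|T\chi_A\|_{q_i,\infty}\le C_i\,\nu(A)^{1/p_i}$ is precisely the statement that $T$ is of restricted weak type $(p_i,q_i)$ for $i=0,1$.

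The core step is the pointwise rearrangement inequality
\[
(Tf)^*(t)\ \le\ C\,(Sf^*)(t)\qquad(t>0),
\]
valid for nonnegative simple $f$, where $S$ is the Calder\'on operator attached to the endpoints $(\frac1{p_0},\frac1{q_0})$ and $(\frac1{p_1},\frac1{q_1})$: a sum of two weighted Hardy averaging operators on $(0,\infty)$, one integrating over $(0,t)$ with the weight governed by one endpoint and the other over $(t,\infty)$ with the weight governed by the other. To prove it I would expand $f=\sum_j d_j\chi_{B_j}$ along its level sets, with $B_j$ nested and $d_j>0$, fix $t$, and split this sum at the index where $\nu(B_j)$ crosses a threshold comparable to an appropriate power of $t$. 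Applying the restricted weak type $(p_0,q_0)$ estimate to the part coming from the large level sets and the $(p_1,q_1)$ estimate to the part coming from the small ones, distributing the height $\lambda$ among the pieces, and invoking $\mu(|T\chi_{B_j}|>\tau)\le(C_i\,\nu(B_j)^{1/p_i}/\tau)^{q_i}$, one is left with two series that converge geometrically, the gaps being supplied by $p_0\neq p_1$ and $q_0\neq q_1$, and which sum to the two terms of $Sf^*(t)$.

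Granting the rearrangement inequality, the proof finishes via the mapping properties of $S$. Since $\|Tf\|_q=\|(Tf)^*\|_{L_q(0,\infty)}$ and $\|f\|_p=\|f^*\|_{L_p(0,\infty)}$, it remains to check $\|Sf^*\|_{L_q(0,\infty)}\le C\|f^*\|_{L_p(0,\infty)}$. Each of the two Hardy operators making up $S$ is bounded from $L_p(0,\infty)$ to $L_q(0,\infty)$ exactly because $(\frac1p,\frac1q)$ lies strictly inside the segment joining the two endpoints (here $p_0\neq p_1$ and $q_0\neq q_1$ are used) and satisfies $p\le q$; this last condition, which is where $p_i\le q_i$ enters, is precisely the range in which the relevant weighted Hardy inequalities hold. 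When in addition both $q_i>1$, the same conclusion can be reached abstractly by real interpolation: the restricted estimates then upgrade to $T:L_{p_i,1}\to L_{q_i,\infty}$, the $K$-method gives $T:(L_{p_0,1},L_{p_1,1})_{\theta,p}\to(L_{q_0,\infty},L_{q_1,\infty})_{\theta,p}$, and identifying these spaces as $L_{p,p}=L_p$ and $L_{q,p}$ together with the embedding $L_{q,p}\hookrightarrow L_{q,q}=L_q$ (valid since $p\le q$) yields the result.

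The main obstacle is the rearrangement inequality of the second paragraph. Since the hypotheses control $T$ only on characteristic functions and the functionals $\|\cdot\|_{q_i,\infty}$ are not subadditive, one cannot merely add up the contributions of the pieces of $f$; the delicate point is to choose the splitting index and the allocation of the height $\lambda$ so that the two endpoint estimates, which decay in opposite directions away from the split, combine into a convergent bound. The most sensitive situation is when an endpoint sits on the diagonal, forcing $p_i=q_i=1$: there the weak-$L_1$ quasinorm is genuinely non-normable, the naive upgrade to $T:L_1\to L_{1,\infty}$ fails, and the summation must exploit the geometric spacing of the level sets rather than a triangle inequality. Once this estimate is in place, the passage to the strong-type bound is a routine application of classical Hardy inequalities.
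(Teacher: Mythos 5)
The paper does not actually prove this statement: it quotes it as the Stein--Weiss restricted weak type interpolation theorem with a citation to \cite[Theorem IV.5.5]{BS}. Your outline is essentially the proof given in that reference --- reduction to simple functions, the pointwise rearrangement estimate $(Tf)^*(t)\lesssim S f^*(t)$ against the Calder\'on operator obtained by splitting the level-set decomposition at the crossover threshold between the two endpoint estimates, and finally the power-weighted Hardy inequalities, which is where $p_\theta\le q_\theta$ enters --- and you correctly identify the only delicate points (the geometric summation, which in a full write-up requires a dyadic choice of the decomposition or a telescoping allocation of the heights rather than literal ``geometric spacing'' of arbitrary level sets, and the non-normability of $L_{1,\infty}$ when some $q_i=1$), so this matches the intended argument.
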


Let us recall the definition of Hausdorff measure in a metric space $(X,d)$. For $A\subset X$, let $|A|=\sup\{d(x,y):x,y\in A\}$ denote the diameter of $A$. Given $s>0$, for $E\subset X$ and $\delta>0$, let
\begin{equation}\label{eq:Hsdelta}
\mathcal H_\delta^s(E)=\inf\{\sum_{i=1}^\infty |A_i|^s: E\subset \bigcup_{i=1}^\infty A_i,\,0<|A_i|\leq \delta\}.
\end{equation}
Let us denote
\begin{equation}\label{eq:Hs}
\mathcal H^s(E)=\lim_{\delta\rightarrow0} \mathcal H^s_\delta(E)=\sup_{\delta>0} \mathcal H^s_\delta(E),
\end{equation}
which always exists, though it could be infinite, and which defines an outer measure. The restriction of $\mathcal H^s$ to the $\sigma$-algebra of $\mathcal H^s$-measurable subsets, which contains the Borel sets, is the \textit{Hausdorff $s$-dimensional measure} (cf. \cite[Chapter 1]{Falconer}). Moreover, the \textit{Hausdorff dimension} of $E$ is the unique number $dim_{\mathcal H}(E)$ such that
$$
\mathcal H^s(E)=
\left\{
\begin{array}{ccl}
 \infty &   & \text{if }0\leq s<dim_{\mathcal H}(E),  \\
  &   &   \\
 0 &   & \text{if }s>dim_{\mathcal H}(E).  
\end{array}
\right.
$$

Recall that given $Q>0$, a metric space $(X,d)$ is said to be Ahlfors Q-regular if its Hausdorff dimension equals $Q$ and if $\mathcal H^Q$ denotes the corresponding Hausdorff measure, then there are constants $c,C>0$ such that for every $x\in X$ and $r>0$ we have
\begin{equation}\label{Q-regular}
    c r^Q\leq \mathcal{H}^Q(B(x,r))\leq C r^Q,
\end{equation}
where $B(x,r)=\{y\in X:d(x,y)< r\}$. 

In particular, we will be using the fact that for every $0<\alpha<1$, there exist a subset $\Omega_\alpha\subset(0,1)$ which equipped with the $\alpha$-dimensional Hausdorff measure and the euclidean metric is an Ahlfors $\alpha$-regular space (cf. \cite[Section 8.3]{Falconer}). This fact has been recently extended in \cite{AMS}, where it is shown that  any Ahlfors $Q$-regular metric space $(X,d)$ contains for every $0<\alpha<Q$, a closed subset $Y_\alpha\subset X$ such that $(Y_\alpha,d)$ is Ahlfors $\alpha$-regular.

\begin{lemma}\label{separablemeas}
For every $0<\alpha<1$, $(\Omega_\alpha,\mathcal H^\alpha)$ is an atomless separable measure space.
\end{lemma}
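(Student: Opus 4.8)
The plan is to verify the two required properties separately, relying on the fact that Ahlfors $\alpha$-regularity of $\Omega_\alpha$ forces $\mathcal H^\alpha(\Omega_\alpha)$ to be both strictly positive and finite (use \eqref{Q-regular} with a single ball containing the bounded set $\Omega_\alpha$, and positivity at any point of $\Omega_\alpha$). Thus $(\Omega_\alpha,\mathcal H^\alpha)$ is a finite Borel measure space carried by a bounded subset of $(0,1)$, and the whole lemma reduces to standard verifications.

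For separability, I would exploit that $\Omega_\alpha\subset(0,1)$ inherits a separable metric from $\mathbb R$. Fixing a countable dense set $\{x_n\}\subset\Omega_\alpha$, the relatively open balls $B(x_n,r)\cap\Omega_\alpha$ with $r\in\mathbb{Q}_{>0}$ form a countable base and hence generate the Borel $\sigma$-algebra; the countable Boolean algebra $\mathcal D$ generated by this base approximates every Borel set in $\mathcal H^\alpha$-measure, the measure being finite and Borel regular. Consequently, simple functions supported on members of $\mathcal D$ with rational coefficients are dense in $L_p(\Omega_\alpha,\mathcal H^\alpha)$, which yields separability of the measure space.

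For atomlessness, the key preliminary observation is that singletons are null: for any $\alpha>0$ one has $\mathcal H^\alpha(\{x\})=0$ directly from \eqref{eq:Hsdelta}, or alternatively from the upper bound in \eqref{Q-regular}, since $\mathcal H^\alpha(\{x\})\leq \mathcal H^\alpha(B(x,r))\leq Cr^\alpha\to 0$. Given any measurable $A\subset\Omega_\alpha$ with $\mathcal H^\alpha(A)>0$, consider the monotone function $f(t)=\mathcal H^\alpha\big(A\cap(0,t)\big)$ on $[0,1]$. Continuity from below makes $f$ left-continuous, while continuity from above (valid since $\mathcal H^\alpha$ is finite) together with the vanishing of $\mathcal H^\alpha(A\cap\{t\})$ makes $f$ right-continuous; thus $f$ is continuous, with $f(0)=0$ and $f(1)=\mathcal H^\alpha(A)$. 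By the intermediate value theorem there is $t_0$ with $\mathcal H^\alpha\big(A\cap(0,t_0)\big)=\tfrac12\mathcal H^\alpha(A)$, exhibiting a measurable splitting of $A$ into two pieces of equal, hence strictly smaller, measure. Therefore $A$ is not an atom, and $(\Omega_\alpha,\mathcal H^\alpha)$ is atomless.

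The argument is essentially routine; the only points genuinely requiring the Ahlfors hypothesis are the finiteness and positivity of $\mathcal H^\alpha(\Omega_\alpha)$, which legitimize the continuity-from-above step and rule out the degenerate case of the zero measure. The crux is the continuity of $f$, from which atomlessness follows immediately via the intermediate value theorem, so I do not anticipate any serious obstacle.
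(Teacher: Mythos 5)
Your proof is correct, but it diverges from the paper's argument at both steps, in essentially opposite directions. For separability, the paper works directly from the definition of $\mathcal H^\alpha_\delta$ in \eqref{eq:Hsdelta}: given a measurable $E$, it explicitly builds efficient covers by open convex sets, enlarges each to a closed ball with center in a fixed countable dense set and rational radius, and produces a set $S$ in the resulting countably generated $\sigma$-algebra with $E\subset S$ and $\mathcal H^\alpha(S)=\mathcal H^\alpha(E)$. You instead invoke the general machinery for finite Borel measures on second countable metric spaces: the countable algebra generated by rational balls about a countable dense set approximates every Borel (hence, by Borel regularity of $\mathcal H^\alpha$, every measurable) set in measure. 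This is shorter and perfectly valid, but it leans on standard regularity/approximation theorems where the paper gives a self-contained construction tailored to Hausdorff measure. For atomlessness, the roles reverse: both proofs begin by noting $\mathcal H^\alpha(\{x\})\leq \mathcal H^\alpha(B(x,r))\leq Cr^\alpha\to0$ from \eqref{Q-regular}, but the paper then simply asserts that a Borel regular measure without point masses has no atoms, whereas you supply a complete argument by exploiting the linear order of $(0,1)$: the function $t\mapsto \mathcal H^\alpha(A\cap(0,t))$ is continuous (left-continuity from below, right-continuity from above using finiteness of the measure and the vanishing of point masses), so the intermediate value theorem splits any set of positive measure into two pieces of half the measure. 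Your IVT argument is actually more explicit than the paper's one-line appeal to regularity, and your preliminary observation that Ahlfors regularity forces $0<\mathcal H^\alpha(\Omega_\alpha)<\infty$ correctly underwrites the continuity-from-above step. No gaps.
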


\begin{proof}
For every $x\in \Omega_\alpha$, we have that 
$$
\mathcal H^\alpha(\{x\})\leq \inf_{r>0} \mathcal H^\alpha(B(x,r))\leq C\inf_{r>0}r^\alpha=0.
$$
Thus, since  the measure $H^{\alpha}$ is  regular Borel, it follows that $(\Omega_\alpha, \mathcal H^\alpha)$ contains no atoms.

Now, take a countable set $(x_k)_{k=1}^\infty\subset \Omega_\alpha$ which is dense in $\Omega_\alpha$. Consider $\Sigma_{\mathbb Q}$, the $\sigma$-algebra generated by closed balls (intervals) in $\Omega_\alpha$, with center in some $x_k$ and rational radius. We claim that for every $\mathcal H^\alpha$-measurable subset $E$ of $\Omega_\alpha$, there is $S\in\Sigma_{\mathbb Q}$ such that $E\subset S$ and $\mathcal H^\alpha(E)= \mathcal H^\alpha(S)$. Separability will follow. 

Indeed, note first that in the definition of Hausdorff measure, the infimum in \eqref{eq:Hsdelta} can be computed only with $A_i$ being closed/open convex sets, that is, real intervals intersected with $\Omega_\alpha$. We follow a similar approach as in \cite[Theorem 1.6]{Falconer}. Given an $\mathcal H^\alpha$-measurable subset $E$ of $\Omega_\alpha$, for $i\in\mathbb N$ we can choose a collection of open convex sets $(U_{ij})_{j\in\mathbb N}$ such that $|U_{ij}|<\frac1i$ for each $j\in\mathbb N$, $E\subset \bigcup_{j\in \mathbb N} U_{ij}$ and 
$$
\sum_{j\in\mathbb N}|U_{ij}|^\alpha<\mathcal H^\alpha_{\frac1i}(E)+\frac1i. 
$$
For each $i,j\in\mathbb N$ we can consider $x_{k(i,j)}$  and $r_{ij}\in\mathbb Q$ such that the closed ball $B(x_{k(i,j)},r_{ij})$ satisfies
$$
U_{ij}\subset B(x_{k(i,j)},r_{ij})\quad\text{and}\quad |B(x_{k(i,j)},r_{ij})|<|U_{ij}|+\frac{1}{(i2^j)^{\frac1\alpha}}.
$$
Let 
$$
S=\bigcap_{i\in\mathbb N}\bigcup_{j\in\mathbb N} B(x_{k(i,j)},r_{ij})\in \Sigma_{\mathbb Q}.
$$
It follows that $E\subset S$ and  $S\subset \bigcup_{j\in\mathbb N} B(x_{k(i,j)},r_{ij})$ with 
$$
r_{ij}<\frac12\Big(|U_{ij}|+\frac{1}{(i2^j)^{\frac1\alpha}}\Big)<\frac1{2i}+\frac{1}{2(2i)^{\frac1\alpha}}.
$$ 
Thus, if we set $\delta_i=\frac1i+\frac{1}{(2i)^{\frac1\alpha}}$ it follows that
$$
\mathcal H^\alpha_{\delta_i}(S)\leq \sum_{j\in\mathbb N} | B(x_{k(i,j)},r_{ij})|^\alpha< \sum_{j\in\mathbb N} \Big(|U_{ij}|+\frac{1}{(i2^j)^{\frac1\alpha}}\Big)^\alpha \leq \sum_{j\in\mathbb N} |U_{ij}|^\alpha+ \frac1i< \mathcal H^\alpha_{\frac1i}(E)+\frac2i.
$$
Letting $i\rightarrow \infty$ we get $\mathcal H^\alpha(S)= \mathcal H^\alpha(E)$, as claimed.
\end{proof}

\section{The $V$-characteristic set of an operator}\label{s:Vcharacteristic}

Let us see next that if the $V$-characteristic set intersects the upper triangle $\{(\alpha,\beta):0<\alpha< \beta<1\}$, then it must do so in a vertical or horizontal segment (or in other words, with slope in $\{0,\infty\}$).

\begin{proposition}\label{prop:V_uppertriangle} 
Let $T:L_\infty\rightarrow L_1$ be an operator.
\begin{enumerate}
\item If there is $\alpha_0<\beta_0\leq\frac12$ such that $(\alpha_0,\beta_0)\in V(T)$, then
$$
\{(\alpha,\beta_0):0<\alpha\leq \alpha_0\}\subset V(T).
$$
\item If there is $\frac12\leq\alpha_0<\beta_0$ such that $(\alpha_0,\beta_0)\in V(T)$, then
$$
\{(\alpha_0,\beta):\beta_0\leq\beta< 1\}\subset V(T).
$$
\end{enumerate}
\end{proposition}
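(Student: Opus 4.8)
The plan is to prove (1) directly and then deduce (2) from it by duality. Write $(\alpha_0,\beta_0)=(1/p_0,1/q_0)$, so the hypothesis of (1) reads $2\le q_0<p_0<\infty$. On the horizontal segment $\{(\alpha,\beta_0):0<\alpha\le\alpha_0\}$ strict singularity is immediate: each such point lies in the upper-left corner of $(\alpha_0,\beta_0)$, so by monotonicity of $S(T)$ (and of $L(T)$) it belongs to $S(T)$ and $T$ is bounded there. Hence the only thing to prove is that $T\colon L_p\to L_{q_0}$ is \emph{non-compact} for every $p\ge p_0$. This is the whole difficulty, since non-compactness need not pass to a smaller domain: shrinking $L_{p_0}$ to $L_p$ with $p>p_0$ could a priori destroy it.

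The first step I would establish is a rigidity lemma: \emph{if $2\le q_0<p_0$ and $T\colon L_{p_0}\to L_{q_0}$ is strictly singular, then it is automatically $M$-weakly compact}. If not, there is a disjoint normalized sequence $(d_k)$ in $L_{p_0}$ with $\inf_k\|Td_k\|_{q_0}>0$; being disjoint and normalized, $(d_k)$ is isometrically equivalent to the $\ell_{p_0}$-basis and weakly null, so $(Td_k)$ is weakly null in $L_{q_0}$ with norms bounded above and below. Since $q_0\ge 2$, the Kadec--Pe\l czy\'nski dichotomy gives, after passing to a subsequence, that $(Td_k)$ is equivalent to the unit vector basis of $\ell_2$ or of $\ell_{q_0}$ (for $q_0=2$ only the $\ell_2$ case occurs). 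Boundedness of $T$ then forces $\|a\|_2\lesssim\|a\|_{p_0}$, or $\|a\|_{q_0}\lesssim\|a\|_{p_0}$, against $\|\sum a_kd_k\|_{p_0}=\|a\|_{p_0}$; testing on a block of $n$ equal coefficients contradicts both (here $2,q_0<p_0$ is used). This proves the lemma. As compactness equals AM-compactness together with $M$-weak compactness, and our $T$ is non-compact, it follows that $T$ is \emph{not} AM-compact.

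The payoff is that non-AM-compactness yields an \emph{order bounded} witness to non-compactness, which survives truncation. Indeed there are $x\in L_{p_0}^+$ and a sequence $(y_k)$ with $|y_k|\le x$ such that $(Ty_k)$ is not relatively compact in $L_{q_0}$. Since $|y_k|\le x\in L_{p_0}$, the family $(|y_k|^{p_0})$ is uniformly integrable, so the truncations $y_k^M=y_k\mathbf 1_{\{|y_k|\le M\}}$ satisfy $\sup_k\|y_k-y_k^M\|_{p_0}\le\|x\mathbf 1_{\{x>M\}}\|_{p_0}\to0$. Fixing $M$ so large that $\|T\|\,\|x\mathbf 1_{\{x>M\}}\|_{p_0}$ is below a quarter of the separation constant of $(Ty_k)$, the sequence $(Ty_k^M)$ is still non-relatively-compact, while $\|y_k^M\|_\infty\le M$ gives $\|y_k^M\|_p\le M^{1-p_0/p}\|x\|_{p_0}^{p_0/p}$ uniformly in $k$ for every $p\ge p_0$. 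Thus $(y_k^M)$ is bounded in $L_p$ with non-relatively-compact image, so $T\colon L_p\to L_{q_0}$ is non-compact. Combined with the first paragraph this places the entire segment in $S(T)\setminus K(T)=V(T)$, proving (1).

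Finally I would derive (2) from (1) by duality. On the vertical segment $\{(\alpha_0,\beta):\beta_0\le\beta<1\}$ strict singularity is again free from monotonicity of $S(T)$, so only non-compactness must be checked, and compactness is self-dual under $\phi$: $(\alpha,\beta)\in K(T)$ iff $\phi(\alpha,\beta)\in K(T^*)$ by Schauder's theorem. Since $\tfrac12\le\alpha_0<\beta_0$ puts $(\alpha_0,\beta_0)$ in $R^c$, relation \eqref{V(T)dual} gives $\phi(\alpha_0,\beta_0)=(1-\beta_0,1-\alpha_0)\in V(T^*)$, and this point satisfies $1-\beta_0<1-\alpha_0\le\tfrac12$, i.e. it is of the type treated in (1) for $T^*$. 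Applying (1) to $T^*$ shows that the horizontal segment $\{(\alpha',1-\alpha_0):0<\alpha'\le1-\beta_0\}$ avoids $K(T^*)$; since $\phi$ carries this segment exactly onto $\{(\alpha_0,\beta):\beta_0\le\beta<1\}$, self-duality of compactness yields that the latter avoids $K(T)$, completing (2). The main obstacle throughout is the rigidity lemma of the second paragraph: ensuring that non-compactness of a strictly singular $T\colon L_{p_0}\to L_{q_0}$ with $q_0\ge2$ is never carried solely by disjoint ``peaks'' but admits an order bounded (uniformly integrable) witness, as this is precisely what lets the truncation survive the passage to a smaller domain.
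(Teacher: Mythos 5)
Your proof is correct and follows essentially the same route as the paper: both arguments rest on the automatic $M$-weak compactness of every bounded $T:L_p\to L_q$ when $2\le q<p$ (via the Kadec--Pelczynski dichotomy), the decomposition of compactness into AM-compactness plus $M$-weak compactness, a truncation argument to move non-compactness up the domain scale, and duality for part (2). The only difference is organizational: you truncate an order-bounded witness of non-compactness directly, whereas the paper runs the same truncation contrapositively on order intervals $[-|f|,|f|]\subset[-M_\varepsilon,M_\varepsilon]+\varepsilon B_{L_p}$ to reach a contradiction.
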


\begin{proof}
Suppose that $\alpha_0<\beta_0\leq\frac12$ and let $p=\frac1{\alpha_0}$, $q=\frac1{\beta_0}$, so that $2\leq q<p<\infty$. Note that for this range of $p$ and $q$, every operator $T:L_p\rightarrow L_q$ is M-weakly compact. Indeed, suppose otherwise that there exists a disjoint sequence $(x_n)$ in the ball of $L_p$ such that $\|Tx_n\|_q\geq \alpha>0$. In particular, $(x_n)$ is equivalent to the unit vector basis of $\ell_p$. By \cite{KP}, there is a subsequence of $(x_n)$, not relabelled, such that $(Tx_n)$ is either equivalent to the unit vector basis of $\ell_q$ or $\ell_2$. In either case, we would have
$$
n^{1/q}\lesssim \Big\|\sum_{i=1}^n Tx_i\Big\|\leq\|T\|\Big\|\sum_{i=1}^n x_i\Big\|\lesssim n^{1/p},
$$
which is a contradiction for large $n$ with $q<p$.

Now if $(\alpha_0,\beta_0)\in V(T)$ for some $T:L_\infty\rightarrow L_1$, we have that $T:L_p\rightarrow L_q$ is strictly singular and not compact. Since strictly singular operators form an ideal, for every $r\in[p,\infty]$ it follows that
$$
T:L_r\hookrightarrow L_p\rightarrow L_q
$$
is strictly singular. Suppose that for some $r\in[p,\infty]$, the operator $T:L_r\rightarrow L_q$ were compact. We claim that in this case $T:L_p\rightarrow L_q$ must be AM-compact: indeed, if for some $r\in[p,\infty]$ $T:L_r\rightarrow L_q$ is compact, then so is $T:L_\infty\rightarrow L_q$. Now, for arbitrary $f\in L_p$, and any $\varepsilon>0$, taking $M_\varepsilon\in\mathbb R_+$ such that $\|(|f|-M_\varepsilon)_+\|_p\leq\varepsilon$ we have that
$$
[-|f|,|f|]\subset[-M_\varepsilon,M_\varepsilon]+\varepsilon B_{L_p}.
$$
Since $T[-M_\varepsilon,M_\varepsilon]$ is relatively compact, it follows that $T[-|f|,|f|]$ is also relatively compact in $L_q$. Thus, $T$ is AM-compact, as claimed.

Therefore, by \cite[Proposition 3.7.4]{MN}, $T:L_p\rightarrow L_q$ is compact, being AM-compact and M-weakly compact. This contradiction shows that for every $r>p$, $T:L_r\rightarrow L_q$ is not compact, so
$$
\{(\alpha,\beta_0):0<\alpha\leq \alpha_0\}=\Big\{\Big(\frac1r,\frac1q\Big):r\geq p\Big\}\subset V(T).
$$

Finally, the case when $\frac12\leq\alpha_0<\beta_0$  follows from the previous one using duality arguments together with \eqref{V(T)dual}.
\end{proof}

\begin{remark}
As a consequence of \cite[Theorem 9]{HST17}, in part (1) above it holds that $T$ is not bounded from $L_r$ to $L_s$ for any $1<r<\infty$, $s>q$; while in part (2), $T$ is not bounded from $L_r$ to $L_s$ for any $1<s<\infty$ and $r<p$.
\end{remark}

\begin{definition}
Let us denote by $\mathfrak{L}$ the set of all affine lines $\ell$ with slope
$k(\ell)\in[0,\infty]$, intersected with the square $(0,1)\times (0,1)$ such that either:
\begin{enumerate}
\item $k(\ell)=0,\infty$ and $\ell\cap \{(\alpha,\beta):0<\alpha\leq\frac12\leq\beta<1\}=\emptyset;$ or,
\item $k(\ell)>0$ and $\ell$ lies entirely below the diagonal $\{(\alpha,\alpha):\alpha\in(0,1)\}$.
\end{enumerate}
\end{definition}

Note each $\ell\in\mathfrak{L}$ decomposes the square $(0,1)\times(0,1)$ in three disjoint regions $L_\ell$, $R_\ell$ and $\ell$, where $L_\ell$ denotes the left-hand side of $\ell$, that is, the one containing the set $\{(\alpha,\beta):0<\alpha\leq\frac12\leq\beta<1\}$.

\begin{theorem}\label{mainline}
For each $\ell\in\mathfrak{L}$, there is an operator $T:L_\infty\rightarrow L_1$ such that
\begin{enumerate}
    \item $L(T)=S(T)=L_\ell\cup \ell$,
    \item $K(T)=L_\ell$,
    \item $V(T)=\ell$.
\end{enumerate}
\end{theorem}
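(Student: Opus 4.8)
The plan is to realize each non-degenerate line of $\mathfrak L$ (those in case (2), with slope $k=k(\ell)\in(0,\infty)$) as the \emph{critical line} of a Riesz potential acting between two Ahlfors regular spaces of different Hausdorff dimension; the degenerate horizontal and vertical cases (case (1)) are supplied by the known examples with horizontal or vertical $V$-characteristic recalled in the Introduction. Fix dimensions $a,b$ with $a/b=k$ and, using \cite{AMS}, realize $\Omega_a,\Omega_b$ as Ahlfors regular sets inside a common ambient space, the one of smaller dimension nested as a closed subset of the other; write $d$ for the ambient metric and $\mathcal H^a,\mathcal H^b$ for the respective Hausdorff measures. For $0<\gamma<a$ I would consider
$$
T_\gamma f(x)=\int_{\Omega_a}d(x,y)^{-\gamma}f(y)\,d\mathcal H^a(y),\qquad x\in\Omega_b .
$$
By Lemma \ref{separablemeas} both $(\Omega_a,\mathcal H^a)$ and $(\Omega_b,\mathcal H^b)$ are atomless and separable, hence measure isomorphic to $(0,1)$, so $T_\gamma$ transports to an operator $T:L_\infty\to L_1$ whose characteristic sets may be computed in the coordinates $(\tfrac1p,\tfrac1q)$. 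A dilation computation shows the ratio $\|T_\gamma f\|_{L_q(\mathcal H^b)}/\|f\|_{L_p(\mathcal H^a)}$ to be homogeneous of degree
$$
\Phi\Big(\tfrac1p,\tfrac1q\Big):=(a-\gamma)-a\,\tfrac1p+b\,\tfrac1q ,
$$
so the natural boundary is $\{\Phi=0\}$, a line of slope $a/b=k$ and intercept $(\gamma-a)/b$. As $0<\gamma<a$ the intercept sweeps out $(-k,0)$, which is exactly the range of intercepts of slope-$k$ lines lying strictly below the diagonal; hence $a,b,\gamma$ can be tuned so that $\{\Phi=0\}\cap(0,1)^2=\ell$.

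For boundedness I would first establish the restricted weak-type endpoint estimates $\|T_\gamma\chi_A\|_{q_i,\infty}\le C_i\,\mathcal H^a(A)^{1/p_i}$ at the two extremities of $\ell$, using Ahlfors regularity \eqref{Q-regular} to bound $\int_A d(x,y)^{-\gamma}\,d\mathcal H^a(y)$ by the appropriate powers of $\mathcal H^a(A)$. Theorem \ref{Stein-Weiss} then gives $T:L_{p_\theta}\to L_{q_\theta}$ bounded at every interior point of $\ell$, and monotonicity of $L(T)$ propagates this to all of $L_\ell$. Conversely, testing on normalised bumps $f_n=\mathcal H^a(A_n)^{-1/p}\chi_{A_n}$ carried by balls $A_n$ of radius $r_n\to0$, the same scaling yields $\|Tf_n\|_q\gtrsim r_n^{\Phi}$, which tends to $\infty$ when $\Phi<0$; hence $T$ is unbounded on $R_\ell$ and $L(T)=L_\ell\cup\ell$, proving (1). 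These bumps also give $\|Tf_n\|_q\gtrsim1$ on $\ell$ (where $\Phi=0$), so $T$ is non-compact there and $\ell\cap K(T)=\emptyset$. At a strictly subcritical point ($\Phi>0$) I would split $T=T_N+R_{\delta_N}$ into a far part with bounded kernel $d(x,y)^{-\gamma}\mathbf 1_{d(x,y)\ge\delta_N}$ (hence compact) and a near-diagonal remainder $R_{\delta_N}$, and invoke the dilation estimate $\|R_{\delta_N}\|_{p\to q}\lesssim\delta_N^{\Phi}\to0$ to conclude compactness. Thus $K(T)=L_\ell$, proving (2).

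It remains to identify $S(T)$. Since $K(T)\subseteq S(T)\subseteq L(T)$ we already have $L_\ell\subseteq S(T)\subseteq L_\ell\cup\ell$, so everything reduces to strict singularity along $\ell$, which I would derive from the characterisation of \cite[Theorem 2.4]{FHKT}: as $L_p$ has finite cotype, it suffices to check that $T:L_p\to L_q$ is AM-compact and disjointly strictly singular. AM-compactness follows again from $T=T_N+R_{\delta_N}$: fixing $x\in L_p^+$ and using dominated convergence (with dominating function $Tx\in L_q$) gives $\sup_{|g|\le x}\|R_{\delta_N}g\|_q\le\|R_{\delta_N}x\|_q\to0$, so $T[-x,x]$ is totally bounded. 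For disjoint strict singularity, recall that a normalised disjoint sequence in $L_p$ spans an isometric copy of $\ell_p$; were $T$ bounded below on it, $L_q$ would contain an isomorphic copy of $\ell_p$. But for $1<p<q<\infty$ with $p\neq2$ there is no such copy (cf.\ \cite[Ch.~2.c]{LT1}), a contradiction. Hence $T$ is strictly singular at every point of $\ell$ with $p\neq2$, and the single exceptional point $p=2$ (if it lies on $\ell$) is recovered by interpolating along $\ell$ between two neighbouring strictly singular points via Theorem \ref{teo:SSint}. This gives $\ell\subseteq S(T)$, whence $S(T)=L_\ell\cup\ell$ and finally $V(T)=S(T)\setminus K(T)=\ell$.

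The main obstacle I anticipate is analytic and geometric rather than structural: making the kernel estimates rigorous between two Ahlfors regular spaces of different dimension. Unlike the Euclidean Riesz potential, here the singularity of $d(x,y)^{-\gamma}$ interacts with two measures of different homogeneity and with the way $\Omega_a$ and $\Omega_b$ are nested (in the slope range where the evaluation points lie outside the integration set, the pointwise singular behaviour is replaced by control of the distance to a thin set, which must be quantified by regularity alone). Both the restricted weak-type bounds feeding Theorem \ref{Stein-Weiss} and, most delicately, the dilation estimate $\|R_\delta\|_{p\to q}\sim\delta^{\Phi}$ must therefore be extracted from Ahlfors regularity in the absence of exact self-similarity. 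Once these cross-dimensional estimates are secured, the identification of $L(T)$, $K(T)$ and $S(T)$, and hence of $V(T)=\ell$, proceeds as above.
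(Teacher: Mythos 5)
Your overall strategy is sound and close in spirit to the paper's, but the route differs in two substantive ways. First, the paper keeps the source space Euclidean: it only considers $T_\lambda:L_p(0,1)\to L_q(\Omega_\alpha,\mathcal H^\alpha)$, which by Theorem \ref{teo:compactRiesz} produces the slopes $k(\ell)\geq 1$ (slope $1/\alpha$), and then obtains the slopes in $(0,1)$ by passing to the adjoint via the duality relation \eqref{V(T)dual}. You instead place both the source and the target on Ahlfors regular sets $\Omega_a,\Omega_b$ to hit the slope $a/b$ directly. This is more symmetric and avoids the duality step, but it is also where your plan demands strictly more than the paper proves: all the kernel estimates in Theorem \ref{teo:compactRiesz} (the $L_\infty$ endpoint, and the H\"older splitting feeding the restricted weak-type bound for Theorem \ref{Stein-Weiss}) exploit that the integration variable runs over an interval with Lebesgue measure; in your setting both factors of the H\"older splitting must be controlled purely from the two-sided bounds \eqref{Q-regular} for balls centred at points of the \emph{other} set. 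You correctly flag this as the crux, and it is doable by dyadic-annulus arguments, but it is left unproved. Second, where you prove interior compactness and AM-compactness by a kernel truncation $T=T_N+R_{\delta_N}$ with a quantitative decay $\|R_{\delta_N}\|_{p\to q}\lesssim\delta_N^{\Phi}$ (another unproved estimate), the paper gets $K(T)=L_\ell$ essentially for free from the extrapolation machinery: once $\ell\subset S(T)$ and $L(T)=L_\ell\cup\ell$ are known, Corollary \ref{c:interiorequal} identifies the interiors of $K(T)$, $S(T)$ and $L(T)$. Likewise, for strict singularity on $\ell$ the paper simply invokes \cite[Proposition 2.6]{Flores} for positive integral operators with $p\neq q$, whereas your FHKT-based argument (AM-compactness plus the non-embeddability of $\ell_p$ into $L_q$ for $1<p<q$, $p\neq 2$, with the $p=2$ point recovered by Theorem \ref{teo:SSint}) is a correct and self-contained alternative. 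Two small inaccuracies worth fixing: the horizontal/vertical cases are not covered by the examples recalled in the Introduction (those, $S_\alpha$ and $R_\alpha$, have $V$-sets parallel to the diagonal); the paper builds them explicitly as $J\circ i_{2,q_0}\circ P_{rad}$ and its mirror image. And for $k>1$ the admissible intercepts of lines in $\mathfrak L$ form $(-k,1-k]$ rather than all of $(-k,0)$ (a slope-$k$ line with intercept in $(1-k,0)$ crosses the diagonal inside the square); this does not affect the construction since the range you produce contains what is needed.
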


For the proof of this result, we will make use of \emph{Riesz potential} integral operators between different measure spaces. We refer to \cite[Chapter 8]{KZPS} for background and details on these operators.

Consider the real segment $(0,1)$ equipped with its standard metric and Lebesgue measure. For each Borel set $A\subset (0,1)$, let $\mu(A)$ denote its Lebesgue measure. Let $0<\alpha<1$. According to \cite{AMS} we can take $\Omega_\alpha\subset (0,1)$ a closed Ahlfors $\alpha$-regular subset and denote $\mathcal H^\alpha$ for the corresponding $\alpha$-dimensional Hausdorff measure. Given $0<\lambda<1$, we consider the Riesz potential integral operator $T_\lambda:L_\infty(0,1)\rightarrow L_1(\Omega_\alpha,\mathcal H^\alpha)$  defined by
\begin{equation}\label{Rieszpotential}
T_\lambda f(t)=\int_0^1 \frac{f(u)}{|t-u|^\lambda} du,\quad t\in\Omega_\alpha\subset (0,1).
\end{equation}

\begin{theorem}\label{teo:compactRiesz}
Let $0<\lambda,\alpha<1$. The operator $T_\lambda:L_\infty(0,1)\rightarrow L_1(\Omega_\alpha)$ given in \eqref{Rieszpotential} satisfies that
$$
V(T_\lambda)=\Big\{\Big(x\,,\,\frac{1}{\alpha}(x-1+\lambda)\Big):1-\lambda<x<\min\Big\{1,\frac{1-\lambda}{1-\alpha}\Big\}\Big\}.
$$
\end{theorem}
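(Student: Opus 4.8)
The plan is to describe the four regions of the square $(0,1)^2$ determined by $T_\lambda$ and to locate $V(T_\lambda)$ inside $\partial L(T_\lambda)$ via \eqref{V(T)boundary}. Write $x=\tfrac1p$, $y=\tfrac1q$, let $\ell_1=\{y=\tfrac1\alpha(x-1+\lambda)\}$, and set $\gamma(x,y)=(1-\lambda)+\alpha y-x=\alpha\bigl(y-\tfrac1\alpha(x-1+\lambda)\bigr)$, so that $\gamma=0$ exactly on $\ell_1$. Since the slope $1/\alpha$ of $\ell_1$ exceeds $1$, it meets the diagonal $\{y=x\}$ at the single point $x^*=\tfrac{1-\lambda}{1-\alpha}$, and the set in the statement is precisely the part of $\ell_1\cap(0,1)^2$ lying strictly below the diagonal. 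To find $L(T_\lambda)$, first I would test on a single interval $I_r$ of length $r$ centred at a point of $\Omega_\alpha$: the lower bound in \eqref{Q-regular} gives $T_\lambda\chi_{I_r}(t)\gtrsim r^{1-\lambda}$ on a set of $\mathcal H^\alpha$-measure $\gtrsim r^\alpha$, so $\|T_\lambda\chi_{I_r}\|_q\gtrsim r^{(1-\lambda)+\alpha/q}$ and $T_\lambda(r^{-1/p}\chi_{I_r})$ blows up as $r\to0$ whenever $\gamma<0$; thus $L(T_\lambda)\subset\{\gamma\ge0\}$. Conversely I would prove two weak-type estimates $\|T_\lambda\chi_A\|_{q_i,\infty}\le C_i\,\mu(A)^{1/p_i}$ at two points of the lower-triangle part of $\ell_1$ (where $p_i\le q_i$) and feed them into the Stein--Weiss Theorem \ref{Stein-Weiss}, obtaining boundedness at every interior point of that segment. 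Hence $\partial L(T_\lambda)$ agrees with the lower-triangle part of $\ell_1$; by convexity and monotonicity of $L(T_\lambda)$ the rest of the boundary, over $x>x^*$, lies strictly above the diagonal.

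Next I would show $T_\lambda$ is compact in the interior of $L(T_\lambda)$. Splitting $T_\lambda=T_\lambda^\varepsilon+R^\varepsilon$ with $R^\varepsilon$ carrying the near-diagonal kernel $|t-u|^{-\lambda}\chi_{\{|t-u|\le\varepsilon\}}$, the far part $T_\lambda^\varepsilon$ has bounded kernel and is compact (again by Arzel\`a--Ascoli), while the distribution-function estimate localized to scale $\varepsilon$ gives $\|R^\varepsilon\|_{L_p\to L_q}\lesssim\varepsilon^{\gamma}$. As $\gamma>0$ strictly above $\ell_1$, letting $\varepsilon\to0$ presents $T_\lambda$ as a norm limit of compact operators, in agreement with Corollary \ref{c:interiorequal}. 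On $\ell_1$ one has $\gamma=0$, and the normalized, weakly null bumps $f_n=r_n^{-1/p}\chi_{I_{r_n}}$ with $r_n\to0$ satisfy $\|T_\lambda f_n\|_q\gtrsim1$, so $T_\lambda$ is not compact at any point of $\ell_1$.

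To obtain strict singularity, fix $P_0=(\tfrac1{p_0},\tfrac1{q_0})$ on the lower-triangle part of $\ell_1$ with $p_0\neq2$, so $p_0<q_0$. Because $L_{p_0}$ has finite cotype, by \cite[Theorem 2.4]{FHKT} it is enough to see that $T_\lambda$ is AM-compact and disjointly strictly singular. AM-compactness follows, as in the proof of Proposition \ref{prop:V_uppertriangle}, once one checks that $T_\lambda\colon L_\infty(0,1)\to L_{q_0}(\Omega_\alpha)$ is compact; this I would get from Arzel\`a--Ascoli, since $t\mapsto T_\lambda f(t)$ is equicontinuous on $(0,1)$ uniformly over the unit ball of $L_\infty$. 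For disjoint strict singularity, a disjoint normalized sequence spans an isometric copy of $\ell_{p_0}$; were $T_\lambda$ bounded below on it, $L_{q_0}(\Omega_\alpha)$ would contain $\ell_{p_0}$, which is impossible when $p_0\notin\{2,q_0\}$. Thus $T_\lambda\in S(L_{p_0},L_{q_0})$, and interpolating this with boundedness at neighbouring points of $\ell_1$ through Theorem \ref{teo:SSint} spreads strict singularity over the whole relatively open lower-triangle segment of $\ell_1$; with non-compactness this gives the inclusion $\supseteq$.

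For the reverse inclusion, $V(T_\lambda)\subset\partial L(T_\lambda)$ by \eqref{V(T)boundary}. The boundary points over $x>x^*$ lie strictly above the diagonal, where \eqref{V(T)notinsquare} and Proposition \ref{prop:V_uppertriangle} force any $V$-point onto a horizontal or vertical $V$-segment, which the strictly increasing boundary cannot contain; hence there are no $V$-points there. The crossing $(x^*,x^*)$, where $p=q=:p^*$ and $\gamma=0$, is excluded differently: for well-separated bumps the images $T_\lambda f_n$ are nearly disjoint in $L_{p^*}(\Omega_\alpha)$ with $\|T_\lambda f_n\|\sim1$, so $T_\lambda$ is bounded below on their span, an isomorphic copy of $\ell_{p^*}$, and is therefore not strictly singular there. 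I expect the hard part to be the weak-type bound $\|T_\lambda\chi_A\|_{q,\infty}\lesssim\mu(A)^{1/p}$ for \emph{arbitrary} measurable $A$: single intervals are not extremal, and controlling the multiscale interaction between the one-dimensional kernel and the $\alpha$-dimensional measure $\mathcal H^\alpha$---an Adams-type trace inequality---is what makes Stein--Weiss applicable.
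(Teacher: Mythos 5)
Your architecture matches the paper's in its main lines: restricted weak-type estimates fed into the Stein--Weiss theorem to get boundedness along the segment, concentrated bumps at a point of $\Omega_\alpha$ for non-compactness, and the general structural results (\eqref{V(T)boundary}, Corollary \ref{c:interiorequal}, Proposition \ref{prop:V_uppertriangle}) for the reverse inclusion. But there is a genuine gap exactly where you flag one: the restricted weak-type inequality $\|T_\lambda\chi_A\|_{q,\infty}\leq C\,\mu(A)^{1/p}$ at points of the segment is announced but never proved, and without it there is no boundedness, hence nothing to interpolate, no strict singularity, and no theorem. This estimate is the technical core of the paper's proof, and it is more elementary than the ``Adams-type trace inequality'' you anticipate. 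One factors the kernel as $|t-u|^{-(1-x)}\,|t-u|^{-(x-1+\lambda)}$ and applies H\"older with exponents $\frac{1}{1-\theta},\frac{1}{\theta}$ for a suitably chosen $\theta\in\bigl(\frac1\alpha(x-1+\lambda),x\bigr)$. The first factor is handled by rearrangement, $\int_A|t-u|^{-\frac{1-x}{1-\theta}}\,du\leq\int_{B(t,\mu(A)/2)}|t-u|^{-\frac{1-x}{1-\theta}}\,du\lesssim\mu(A)^{\frac{x-\theta}{1-\theta}}$, so intervals centred at $t$ \emph{are} extremal for arbitrary $A$, contrary to your expectation. The second factor is where the two dimensions interact: $g_u(t)=|t-u|^{-\frac{x-1+\lambda}{\theta}}$ satisfies $\|g_u\|_{\theta q,\infty}\leq C_\alpha$ uniformly in $u$ by the upper Ahlfors bound in \eqref{Q-regular}, and the choice of $\theta$ forces $\theta q>1$, so $\|\cdot\|_{\theta q,\infty}$ is equivalent to a norm (\cite[Lemma IV.4.5]{BS}) and the $u$-integral can be pulled inside. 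Note also that only one such estimate, at an arbitrary point $x$ of the open segment, is needed once it is paired with the pointwise endpoint bound \eqref{rw(1-lambda,0)} corresponding to $p_0=\frac{1}{1-\lambda}$, $q_0=\infty$.

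Two further remarks. Your route to strict singularity (AM-compactness via compactness of $T_\lambda:L_\infty\to L_q$, disjoint strict singularity from the subspace structure of $L_q$, combined through \cite[Theorem 2.4]{FHKT} and Theorem \ref{teo:SSint} to absorb the excluded case $p_0=2$) is sound and genuinely different from the paper, which simply invokes \cite[Proposition 2.6]{Flores} for positive integral operators with $p\neq q$; yours is longer but self-contained. On the other hand, your treatment of the crossing point $(x^*,x^*)$ is not a proof: the assertion that the images of well-separated bumps are ``nearly disjoint'' and span a copy of $\ell_{p^*}$ on which $T_\lambda$ is bounded below would require controlling the slowly decaying tails $|t-t_n|^{-\lambda}$ of the kernel, and the paper explicitly records in the remark following Theorem \ref{teo:compactRiesz} that both boundedness and strict singularity of $T_\lambda$ at this diagonal point remain open. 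You should not claim to settle it; the stated description of $V(T_\lambda)$ excludes that point from the parametrized segment in any case.
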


\begin{proof}
The proof follows the ideas of \cite[Section 8]{KZPS}. For the sake of simplicity, let $T=T_\lambda$. In particular, we have that the argument in the proof of \cite[Theorem 8.3]{KZPS} yields that for some constant $C_\lambda>0$ and every Borel set $A\subset (0,1)$ we have
\begin{equation}\label{rw(1-lambda,0)}
|T \chi_A(t)|\leq C_\lambda \mu(A)^{1-\lambda}.
\end{equation}

Suppose that $\alpha<\lambda$ (in case we have the converse inequality, the argument will be similar). Given $x\in (1-\lambda,\frac{1-\lambda}{1-\alpha})$, let $q_1=\frac{\alpha}{x-1+\lambda}$. We claim that there is $C_x>0$ such that for every Borel set $A\subset (0,1)$ we have
\begin{equation}\label{re(x,(x-1+lambda)/s)}
\|T \chi_A\|_{q_1,\infty}\leq C_x \mu(A)^x.
\end{equation}

To prove this inequality, note first that as $x<\frac{1-\lambda}{1-\alpha}$, we have $\frac{1}{\alpha}(x-1+\lambda)<x$. Hence, we can take $\theta\in (\frac1\alpha(x-1+\lambda),x)$, and applying Holder's inequality we have
\begin{align*}
|T \chi_A(t)|&=\int_A |t-u|^{-\lambda}du=\int_A |t-u|^{1-x} |t-u|^{-(x-1+\lambda)}du\\
&\leq  \Big(\int_A |t-u|^{\frac{1-x}{1-\theta}}du\Big)^{1-\theta} \Big(\int_A |t-u|^{-\frac{x-1+\lambda}{\theta}}du\Big)^{\theta}.
\end{align*}

Now, arguing again as in the proof of \cite[Theorem 8.3]{KZPS}, for some $C>0$, if $r=\mu(A)/2$ we have that
\begin{equation*}
\int_A |t-u|^{\frac{1-x}{1-\theta}}du\leq \int_{B(t,r)} |t-u|^{\frac{1-x}{1-\theta}}du=C\mu(A)^{\frac{x-\theta}{1-\theta}}.
\end{equation*}
Therefore, we get
\begin{equation}\label{pointwiseT_lambda}
|T \chi_A(t)|\leq C^{1-\theta} \mu(A)^{x-\theta} \Big(\int_A |t-u|^{-\frac{x-1+\lambda}{\theta}}du\Big)^{\theta}.
\end{equation}

For each $u\in A$, let $g_u(t)=|t-u|^{-\frac{x-1+\lambda}{\theta}}$ for $t\in \Omega_\alpha$. Then, we have that
\begin{align}\label{gu}
\|g_u\|_{\theta q_1,\infty}&=\sup_{h>0}h\big[\mathcal H^\alpha(\{t\in \Omega_\alpha:g_u(t)\geq h\})\big]^{\frac{1}{\theta q_1}}\\
\nonumber &= \sup_{h>0}h\big[\mathcal H^\alpha(B(u,h^{-\frac{\theta}{x-1+\lambda}}))\big]^{\frac{1}{\theta q_1}}\\
\nonumber &\leq C_\alpha,
\end{align}
where $C_\alpha$ is a constant arising from the fact that $\mathcal H^\alpha$ is Ahlfors $\alpha$-regular.

Note that, due to our choice of $\theta$, it follows that $\theta q_1>1$, so in particular the expression $\|\cdot\|_{\theta q_1,\infty}$ is equivalent to a norm (cf. \cite[Lemma IV.4.5]{BS}). This fact, together with \eqref{pointwiseT_lambda} and \eqref{gu}, implies that
\begin{align*}
\|T \chi_A\|_{q_1,\infty}&\leq C^{1-\theta}\mu(A)^{x-\theta} \Big\|\Big(\int_A |t-u|^{-\frac{x-1+\lambda}{\theta}}du\Big)^{\theta}\Big\|_{q_1,\infty}\\
&=C^{1-\theta}\mu(A)^{x-\theta} \Big\|\int_A |t-u|^{-\frac{x-1+\lambda}{\theta}}du\Big\|^{\theta}_{\theta q_1,\infty}\\
&\leq C^{1-\theta}\mu(A)^{x-\theta} \Big(\int_A \|g_u\|_{\theta q_1,\infty}du\Big)^{\theta}\\
&\leq C^{1-\theta} C_\alpha^\theta\mu(A)^x,
\end{align*}
as we wanted to show.

Therefore, putting together \eqref{rw(1-lambda,0)} and \eqref{re(x,(x-1+lambda)/s)}, we can apply Theorem \ref{Stein-Weiss} with $p_0=\frac{1}{1-\lambda}$, $q_0=\infty$, $p_1=\frac{1}{x}$ and $q_1=\frac{\alpha}{x-1+\lambda}$, to conclude that $T: L_p(0,1)\rightarrow L_q(\Omega_\alpha)$ is bounded for every
$$
1-\lambda<\frac1p<\frac{1-\lambda}{1-\alpha},\quad\quad\frac1q=\frac{1}{\alpha}\Big(\frac1p-1+\lambda\Big).
$$ 

Moreover, since $p\neq q$ and $T$ is a positive integral operator, \cite[Proposition 2.6]{Flores} yields that $T \in S(L_p(0,1),L_q(\Omega_\alpha))$.

Finally, let us see that $T\notin K(L_p(0,1),L_q(\Omega_\alpha))$. Let $t_0\in \Omega_\alpha$. For large enough $k_0\in\mathbb N$, we have $B_k=(t_0-\frac{1}{2k},t_0+\frac{1}{2k})\subset (0,1)$ for every $k\geq k_0$. Let
$$
f_k=\frac{\chi_{B_k}}{\mu(B_k)^{\frac1p}}=k^{\frac1p}\chi_{B_k}.
$$
Then, for $t\in\Omega_s$ we have
$$
T f_k(t)=k^{\frac1p} \int_{B_k} |t-u|^{-\lambda} du.
$$
Let us see that the set $(T_\lambda f_k)_{k= k_0}^\infty$ is not uniformly $q$-integrable. Indeed, let $B'_k=B_k\cap \Omega_\alpha$. For $u\in B_k$ and $t\in B'_k$, we have $|t-u|\leq\frac1{k}$, hence
$$
T f_k(t)\geq k^{\lambda+\frac1p-1}.
$$
Therefore, using that $\mathcal H^\alpha$ is Ahlfors $\alpha$-regular there exists $c_\alpha>0$ such that
$$
\|(T_\lambda f_k)\chi_{B'_k}\|_q\geq \| k^{\lambda+\frac1p-1}\chi_{B'_k}\|_q= k^{\lambda+\frac1p-1}\mathcal H^\alpha(B'_k)^{\frac1q}\geq c_\alpha>0.
$$
Since this holds for every $k\geq k_0$, it follows that $(T f_k)_{k= k_0}^\infty$ is not uniformly $q$-integrable, and so $T \notin K(L_p(0,1),L_q(\Omega_\alpha))$ as claimed (cf. \cite[p. 49]{KZPS}).
\end{proof}

\begin{remark}
The above proof leaves open the question whether the operator $T_\lambda: L_{\frac{1-s}{1-\lambda}}(0,1)\rightarrow L_{\frac{1-\alpha}{1-\lambda}}(\Omega_\alpha)$ is actually bounded. Even in case it were, we do not know if it would be strictly singular. However, it can be deduced from Proposition \ref{prop:V_uppertriangle}, that $T: L_p(0,1)\rightarrow L_p(\Omega_\alpha)$ cannot be bounded for any $p<{\frac{1-\alpha}{1-\lambda}}$.
\end{remark}

\begin{figure}
\centering
\ifx\JPicScale\undefined\def\JPicScale{0.5}\fi
\unitlength \JPicScale mm
\begin{picture}(110,110)(0,0)
\linethickness{0.3mm}
\put(-0.2,0){\line(1,0){100.1}}
\linethickness{0.3mm}
\put(0,0){\line(0,1){100}}
\linethickness{0.1mm}
\put(0,0){\line(1,1){51.3}}
\linethickness{0.1mm}
\put(53,53){\line(1,1){27}}
\linethickness{0.3mm}
\put(80,0){\line(0,1){80}}
\put(0,80){\line(1,0){80.2}}
\linethickness{0.3mm}
\linethickness{1mm}
\put(35,0){\line(1,3){17}}
\put(35,-5){\makebox(0,0)[cc]{\tiny{$(\!1\!-\!\lambda,\!0)$}}}
\put(-7,80){\makebox(0,0)[cc]{\tiny{$(\!0,\!1\!)$}}}
\put(52.2,52.2){\circle{2}}
\put(36,55){\makebox(0,0)[cc]{\tiny{$\!(\!\frac{1-\lambda}{1-\alpha},\!\frac{1-\lambda}{1-\alpha}\!)$}}}
\put(98,0){\makebox(0,0)[cc]{$>$}}
\put(105,0){\makebox(0,0)[cc]{\tiny{$\frac1p$}}}
\put(0,98){\makebox(0,0)[cc]{$\wedge$}}
\put(0,107){\makebox(0,0)[cc]{\tiny{$\frac1q$}}}
\end{picture}
\caption{The $V$-characteristic set of the operator $T_\lambda$ in Theorem \ref{teo:compactRiesz}.\label{figureVT}}
\end{figure}
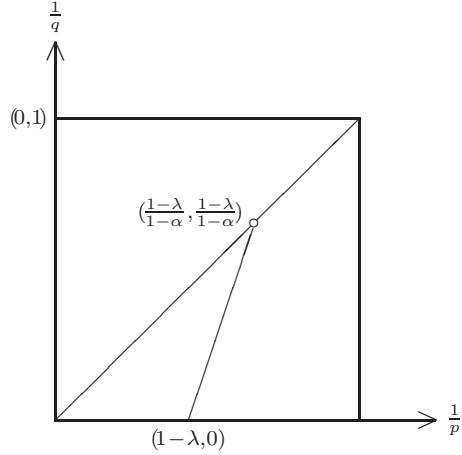

\begin{proof}[Proof of Theorem \ref{mainline}]
We will consider four separate cases depending on the slope $k(\ell)$ of the line segment $\ell\in\mathfrak L$.

\begin{enumerate}
\item $k(\ell)=0$: Since we must have
$$
\ell\cap \{(\alpha,\beta):0<\alpha\leq\frac12\leq\beta<1\}=\emptyset,
$$
then $\ell$ is the horizontal line of equation $\beta=\beta_0$ with $\beta_0<\frac12$. Let $q_0=\frac1{\beta_0}$, and consider the operator $T:L_p\rightarrow L_{q_0}$ given by
$$
T(f)=\sum_{n=1}^\infty\int_0^1 f(t) r_n (t)dt\, f_n,
$$
where $(r_n)$ denote the Rademacher functions and $(f_n)$ is any sequence of normalized pairwise disjoint functions in $L_{q_0}$. It is clear that $T$ admits the factorization
$$
\xymatrix{L_p\ar_{P_{rad}}[d]\ar^T[rr]&&L_{q_0}\\
\ell_2\ar@{^{(}->}_{i_{2,q_0}}[rr]&&\ell_{q_0} \ar_{J}[u] }
$$
where $P_{rad}$ is the projection onto the closed linear span of the Rademacher functions, and $J$ the isometric embeddings via the sequence $(f_n)$ of disjointly supported functions in $L_{q_0}$. It follows easily that $T$ has the required properties.

\item $k(\ell)=\infty$: In this case, we must have that $\ell$ is the vertical line of equation $\alpha=\alpha_0$ with $\alpha_0>\frac12$. Let $p_0=\frac1{\alpha_0}$ and consider the operator $T:L_{p_0}\rightarrow L_q$ given by
$$
T(f)=\sum_{n=1}^\infty\int_0^1 f(t) g_n (t)dt \,r_n,
$$
where $(r_n)$ also denote the Rademacher functions and $(g_n)$ is any sequence of normalized pairwise disjoint functions in $L^*_{p_0}$. It is clear that $T$ admits the factorization
$$
\xymatrix{L_{p_0}\ar_{P}[d]\ar^T[rr]&&L_{q}\\
\ell_{p_0}\ar@{^{(}->}_{i_{p_0,2}}[rr]&&\ell_{2} \ar_{J_{rad}}[u] }
$$
where $P$ is a projection onto the closed linear span of a sequence $(f_n)$ of disjointly supported functions in $L_{p_0}$ satisfying $$\int_0^1f_ng_n=1,$$ and $J_{rad}$ is the embeddings via the Rademacher functions. It follows easily that $T$ has the required properties.

\item $1\leq k(\ell)<\infty$: Let $s=k(\ell)^{-1}\leq1$ and let $\Omega_s\subset (0,1)$ the Ahlfors $s$-regular subset considered in Theorem  \ref{teo:compactRiesz}. Note that by Carath\'eodory's isomorphism theorem (cf. \cite[Ch. 15, Theorem 2]{Royden}) and Lemma \ref{separablemeas},  $L_p(0,1)$ is lattice isometric to $L_p(\Omega_\alpha,\mathcal H^\alpha)$. Now, for $0<\lambda\leq s$ the Riesz potential operator
$$
T=T_\lambda:L_\infty(0,1)\rightarrow L_1(\Omega_\alpha)
$$
given in \eqref{Rieszpotential} satisfies the required properties by Theorem \ref{teo:compactRiesz}.

\item $0<k(\ell)<1$: Let $\ell'=\phi(\ell),$ denote the conjugate line segment to $\ell$. Since $k(\ell')>1$, we can apply the previous argument to construct an operator $T$ such that $V(T)=\ell'$ and $L(T)=L_{\ell'}\cup\ell'$. Hence, by the duality property given in \eqref{V(T)dual}, we get that the adjoint operator satisfies $V(T^*)=\ell$ and $L(T^*)=S(T^*)=L_\ell\cup\ell$, as required.
\end{enumerate}
\end{proof}

\begin{remark}
In the case when the slope of the line segment is rational and greater than one, say $\frac{n}{m}$ with $m\leq n$, one can consider the $n$-dimensional Riesz potential operator 
$$
T_\lambda f(t)=\int_\Omega \frac{f(s)}{\|t-s\|^\lambda} d\mu(s),\quad t\in\Omega,
$$
as an operator $T_\lambda: L_\infty(\Omega)\rightarrow L_1(\Omega')$. Here $\Omega$ denotes the euclidean unit ball in $\mathbb R^n$ centered at the origin, $0<\lambda<n$, $\|\cdot\|$ denotes the euclidean norm in $\mathbb R^n$, $\mu$ is the corresponding Lebesgue measure, and 
$$
\Omega'=\{(x,0)\in\mathbb R^m\times \mathbb R^{n-m}: \|x\|\leq1\}
$$ 
is the $m$-dimensional unit ball embedded in $\mathbb R^n$. In this case, Theorem \ref{mainline} can also be deduced with similar reasonings as above and using also \cite[Theorems 8.3 and 8.9]{KZPS}.
\end{remark}

Recall that if $T_1,T_2:L_\infty\rightarrow L_1$ are positive operators, then
$$
L(T_1+T_2)=L(T_1)\cap L(T_2).
$$

Let us note that the operators exhibited in the proof of Theorem \ref{mainline} for $k(\ell)\in(0,\infty)$ are instances of positive integral operators. In particular, these can be combined to construct operators with a more elaborate $V$-characteristic set as follows.

\begin{proposition}\label{p:intersection}
Let $(T_n)_{n\in\mathbb N}\subset L(L_\infty,L_1)$ be a sequence of positive operators such that
$$
\sup\Big\{\|T_n\|_{L(L_p,L_q)}:n\in\mathbb N,\big(\frac1p,\frac1q\big)\in \bigcap_{k\in\mathbb N} L(T_k)\Big\}<\infty.
$$
Then the operator $T=\sum_{n\in\mathbb N} 2^{-n}T_n$ satisfies
$$
V(T)=\bigcup_{n\in\mathbb N} V(T_n)\cap\bigcap_{n\in\mathbb N} S(T_n).
$$
\end{proposition}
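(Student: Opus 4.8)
The plan is to reduce the identity to two complementary principles: the \emph{norm-closedness} of the ideals $K(L_p,L_q)$ and $S(L_p,L_q)$, and the \emph{stability under domination} of compactness and of strict singularity for positive operators between $L_p$ spaces. Since every point of $V(T)$ and of the right-hand side lies in $(0,1)\times(0,1)$, I fix throughout a pair $(\frac1p,\frac1q)$ with $1<p,q<\infty$ and argue at the level of the single operator $T\colon L_p\to L_q$.

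First I would record the basic structure of $T$. As each $T_n$ is positive, $T=\sum_n 2^{-n}T_n$ is positive and dominates each summand, $0\leq 2^{-n}T_n\leq T$. The uniform bound in the hypothesis makes the partial sums $\sum_{n\leq N}2^{-n}T_n$ a Cauchy sequence in $L(L_p,L_q)$ at every point of $\bigcap_k L(T_k)$, so $T$ is well defined there; together with positivity and $L(T_1+T_2)=L(T_1)\cap L(T_2)$ this yields $L(T)=\bigcap_n L(T_n)$. Both sides of the claimed identity are thus contained in $L(T)=\bigcap_n L(T_n)$, and it is enough to compare them pointwise on this common set, where every $T_n$ (and $T$) is a bounded operator.

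The two ingredients are then as follows. \emph{Closedness}: if every $T_n$ is strictly singular (respectively compact) at the point, each partial sum belongs to the closed ideal $S(L_p,L_q)$ (respectively $K(L_p,L_q)$), and since the partial sums converge to $T$ in operator norm, so does $T$. \emph{Domination}: if $T$ is compact at the point, then $0\leq 2^{-n}T_n\leq T$ and the Dodds--Fremlin theorem (applicable because $L_p$, $L_q$ and their duals have order continuous norm for $1<p,q<\infty$; see, e.g., \cite{MN}) force each $T_n$ to be compact; and if $T$ is strictly singular at the point, then each $T_n$ is strictly singular as well. With these in hand the identity follows. For the inclusion $\supseteq$, a point in $(\bigcup_n V(T_n))\cap(\bigcap_n S(T_n))$ gives $T\in S(L_p,L_q)$ by closedness, while lying in some $V(T_m)$ gives $T_m\notin K(L_p,L_q)$ and hence, by the contrapositive of domination of compactness, $T\notin K(L_p,L_q)$; thus the point is in $V(T)$. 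For $\subseteq$, a point in $V(T)$ has $T$ strictly singular, so by domination of strict singularity it lies in every $S(T_n)$, hence in $\bigcap_n S(T_n)$; and since $T$ is not compact, the contrapositive of closedness yields some $T_m$ that is not compact, which together with $T_m\in S(L_p,L_q)$ places the point in $V(T_m)\subseteq\bigcup_n V(T_n)$.

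The main obstacle is the second half of the domination step, namely that $T$ strictly singular forces each $T_n$ strictly singular. Unlike compactness, strict singularity is not stable under domination in arbitrary Banach lattices, so here I would exploit the $L_p$ setting: since $1<p,q<\infty$, the space $L_p$ has finite cotype and, by \cite[Theorem 2.4]{FHKT}, strict singularity of a positive operator $L_p\to L_q$ is equivalent to the conjunction of AM-compactness and disjoint strict singularity. I would then argue that each of these two properties passes from $T$ to the dominated operators $2^{-n}T_n$ in the $L_p$ scale (for AM-compactness via a localization to order intervals reducing to Dodds--Fremlin, and for disjoint strict singularity via domination results for positive operators on $L_p$ spaces in the spirit of \cite{Flores}), from which strict singularity of each $T_n$ follows. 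Verifying that both constituents of strict singularity are genuinely stable under domination in this range is the delicate point on which the whole argument rests.
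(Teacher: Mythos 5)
Your proposal is correct and follows essentially the same route as the paper: norm convergence of the series at points of $\bigcap_k L(T_k)$ combined with closedness of the ideals $S$ and $K$, plus the domination properties $0\leq T_n\leq T$ for compactness (Dodds--Fremlin \cite{DF}) and for strict singularity. The ``delicate point'' you flag at the end --- that strict singularity passes from $T$ to the dominated positive operators $T_n$ between $L_p$ spaces --- is exactly the content of the domination theorem of \cite{FH}, which the paper simply cites rather than reproving via the AM-compact/disjointly strictly singular decomposition you sketch.
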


\begin{proof}
Let $1<p,q<\infty$. First note that the series $\sum_n 2^{-n}T_n$ converges to $T$ in the norm of $L(L_p,L_q)$ for each
$$
\Big(\frac1p,\frac1q\Big)\in \bigcap_{k\in\mathbb N} L(T_k).
$$

Suppose first that $(\frac1p,\frac1q)\in V(T)$, or equivalently, $T\in S(L_p,L_q)\backslash K(L_p,L_q)$. In this case, as $0\leq T_n \leq T:L_p\rightarrow L_q$, the domination property of strictly singular operators (see \cite{FH}) yields that $T_n\in S(L_p,L_q)$ for every $n\in\mathbb N$. 

On the othe hand, if $T_n\in K(L_p,L_q)$ for every $n\in\mathbb N$, then so would be $T$. Hence, there must be some $n\in\mathbb N$ such that $T_n\in V_{p,q}$.

Conversely, let us assume that the point
$$
\Big(\frac1p,\frac1q\Big)\in\bigcup_{n\in\mathbb N} V(T_n)\cap\bigcap_{n\in\mathbb N} S(T_n).
$$
In particular, we have that $T_n\in S(L_p,L_q)$ for every $n\in\mathbb N$, which implies that $T\in S(L_p,L_q)$. Suppose $T\in K(L_p,L_q)$, then by the domination property of compact operators (see \cite{DF}) we would have that for every $n\in\mathbb N$, $T_n\in K(L_p,L_q)$. This is a contradiction which finishes the proof.
\end{proof}

From Theorem \ref{mainline} and Proposition \ref{p:intersection}, we get the following

\begin{corollary}\label{cor:polygon}
Given line segments $\ell_1,\ell_2,\ldots,\ell_n$ in $\mathfrak L$, with slopes $0< k(\ell_1)<k(\ell_2)<\ldots<k(\ell_n)< \infty$, the set $$P=\bigcap_{i=1}^n L_{\ell_i}$$ is a monotone convex polygon of $(0,1)\times(0,1)$, and there is a positive integral operator $T:L_\infty\rightarrow L_1$ such that $V(T)=\partial P$ (the boundary of $P$ relative to $(0,1)\times(0,1)$).
\end{corollary}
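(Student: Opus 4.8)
The plan is to realize $T$ as a finite positive combination of the single-line operators produced by Theorem \ref{mainline}, and then to read off $V(T)$ from Proposition \ref{p:intersection}.

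First I would record the geometry. Since each $\ell_i$ has slope $k(\ell_i)\in(0,\infty)$ and lies below the diagonal, the definition of $\mathfrak L$ makes $L_{\ell_i}$ an open half-plane intersected with the square, namely the side containing $\{(\alpha,\beta):0<\alpha\le\frac12\le\beta<1\}$. Each such $L_{\ell_i}$ is convex and monotone (moving up and to the left preserves being above a line of positive slope), so $P=\bigcap_{i=1}^n L_{\ell_i}$ is convex and monotone, and being cut out by finitely many lines it is a polygon; this gives the first assertion. As the upper-left region above is contained in every $L_{\ell_i}$, the set $P$ is nonempty with nonempty interior, whence $\overline P=\bigcap_{i=1}^n\overline{L_{\ell_i}}=\bigcap_{i=1}^n(L_{\ell_i}\cup\ell_i)$. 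I would then verify the elementary identity $\partial P=\big(\bigcup_{i=1}^n\ell_i\big)\cap\overline P$ for the boundary relative to the square: any point of $\overline P$ lying on some $\ell_i$ is excluded from the open set $L_{\ell_i}$, hence is not in $P$; conversely a point of $\overline P\setminus P$ must leave some open $L_{\ell_j}$ while staying in its closure, which forces it onto $\ell_j$.

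Next, for each $i$ Theorem \ref{mainline} furnishes an operator $T_i:L_\infty\to L_1$ with $L(T_i)=S(T_i)=L_{\ell_i}\cup\ell_i$, $K(T_i)=L_{\ell_i}$ and $V(T_i)=\ell_i$. Because $k(\ell_i)\in(0,\infty)$, the remark following Theorem \ref{mainline} lets me take each $T_i$ to be a positive integral operator (a Riesz potential, or the adjoint of one). Invoking Carath\'eodory's isomorphism theorem and Lemma \ref{separablemeas} to identify each codomain $L_1(\Omega_{\alpha_i})$ with $L_1(0,1)$ through a lattice isometry---which leaves $L(\cdot)$, $S(\cdot)$, $K(\cdot)$ and all operator norms unchanged---I regard every $T_i$ as a positive integral operator $L_\infty(0,1)\to L_1(0,1)$ and set $T=\sum_{i=1}^n 2^{-i}T_i$, again a positive integral operator.

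Finally I would apply Proposition \ref{p:intersection} (to the finite family, extended by null operators if a sequence is preferred). Its conclusion, together with $V(T_i)=\ell_i$ and $\bigcap_{i=1}^n S(T_i)=\bigcap_{i=1}^n(L_{\ell_i}\cup\ell_i)=\overline P$ from the first step, gives $V(T)=\big(\bigcup_{i=1}^n\ell_i\big)\cap\overline P=\partial P$, as required. The main obstacle is checking the hypothesis of Proposition \ref{p:intersection}, namely the uniform bound over the common domain $\bigcap_{i=1}^n L(T_i)=\overline P$; since there are only finitely many $T_i$ this reduces to bounding each $\sup_{(\frac1p,\frac1q)\in\overline P}\|T_i\|_{L(L_p,L_q)}$. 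Here I would use that the norm of $T_i$ is log-convex by Riesz--Thorin and finite throughout the closed region $L(T_i)\supseteq\overline P$, with the Stein--Weiss estimate from the proof of Theorem \ref{teo:compactRiesz} providing uniform control along the edges of $\overline P$ that lie on $\ell_i$; finiteness on the closure together with convexity then yields the desired finite supremum.
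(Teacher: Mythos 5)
Your proof is correct and follows the paper's argument exactly: take the operators $T_i$ with $V(T_i)=\ell_i$ from Theorem \ref{mainline}, form $T=\sum_{i=1}^n 2^{-i}T_i$, and conclude via Proposition \ref{p:intersection}. The extra details you supply --- the boundary identity $\partial P=\big(\bigcup_{i=1}^n\ell_i\big)\cap\overline{P}$, the lattice-isometric identification of the codomains, and the verification of the uniform norm hypothesis of Proposition \ref{p:intersection} --- are all left implicit in the paper's two-line proof.
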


\begin{proof}
By Theorem \ref{mainline}, for every $1\leq i\leq n$ we can consider an operator $T_i:L_\infty\rightarrow L_1$ such that $V(T_i)=\ell_i$. Let $T=\sum_{i=1}^n 2^{-i}T_i$.The conclusion follows from Proposition  \ref{p:intersection}.
\end{proof}

A similar argument can be used to build $V$-characteristic sets for polygons with horizontal and vertical segments, the only difference is that we must also consider non-integral operators:

\begin{corollary}\label{cor:polygon2}
Given line segments $\ell_1,\ell_2,\ldots,\ell_n$ in $\mathfrak L$, with slopes $0= k(\ell_1)<k(\ell_2)<\ldots<k(\ell_n)= \infty$, the set $$P=\bigcap_{i=1}^n L_{\ell_i}$$ is a monotone convex polygon of $(0,1)\times(0,1)$, and there is an operator $T:L_\infty\rightarrow L_1$ such that $V(T)=\partial P$.
\end{corollary}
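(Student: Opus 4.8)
The plan is to run the same scheme as in Corollary \ref{cor:polygon}, forming $T=\sum_{i=1}^n 2^{-i}T_i$ out of the model operators produced by Theorem \ref{mainline}, but to bypass Proposition \ref{p:intersection}, whose hypotheses require positivity and hence exclude the Rademacher-type operators realizing the horizontal and vertical edges. The key idea is to realize the summands with pairwise disjoint ranges, so that $T$ becomes an isometric copy of the column operator $f\mapsto(2^{-i}T_if)_i$; this forces all three characteristic sets to be intersections, with no appeal to any domination result.

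First I would produce, via Theorem \ref{mainline}, operators $T_1,\dots,T_n:L_\infty\to L_1$ with $L(T_i)=S(T_i)=L_{\ell_i}\cup\ell_i$, $K(T_i)=L_{\ell_i}$ and $V(T_i)=\ell_i$; here $T_1$ (slope $0$) and $T_n$ (slope $\infty$) are the non-integral operators. Using Lemma \ref{separablemeas} together with the Carath\'eodory isomorphism theorem, I would apply lattice isometries to the domains and ranges so that every $T_i$ has common domain $L_\infty(0,1)$ and range contained in $L_1(E_i)$, where $E_1,\dots,E_n$ are pairwise disjoint Borel subsets partitioning $(0,1)$. Since these isometries preserve the $L_q$-norm simultaneously for all $q$, they leave $L(T_i),S(T_i),K(T_i)$ unchanged, and for the resulting operator $T=\sum_{i=1}^n 2^{-i}T_i:L_\infty(0,1)\to L_1(0,1)$ disjointness of supports yields $\|Tf\|_q^q=\sum_{i=1}^n 2^{-iq}\|T_if\|_q^q$ for every $1\le q<\infty$.

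From this identity the three characteristic sets become immediate intersections. For boundedness, $\|Tf\|_q\asymp\max_i\|T_if\|_q$ (finitely many summands), so $L(T)=\bigcap_iL(T_i)$. For compactness, composing with the band projection $\pi_i$ onto $L_q(E_i)$ gives $\pi_iT=2^{-i}T_i$ up to the isometry, so $T$ compact forces each $T_i$ compact, while the converse holds since $T$ is then a finite sum of compacts; thus $K(T)=\bigcap_iK(T_i)$. Strict singularity is the only point needing an argument in place of the missing domination: the same band projections show that $T\in S(L_p,L_q)$ forces each $T_i\in S(L_p,L_q)$, and conversely, given finitely many strictly singular operators with a common domain, an iterated selection of subspaces produces a single infinite dimensional subspace on which all restrictions of the $T_i$ have small norm. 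Concretely, using that strict singularity of $T_i$ lets one pass, inside any infinite dimensional subspace, to a further infinite dimensional subspace on which the restriction of $T_i$ has norm $<\varepsilon$, one builds $Y\supseteq Y_1\supseteq\cdots\supseteq Y_n$ with $\|T_i|_{Y_n}\|<\varepsilon$ for all $i$; then $\|T|_{Y_n}\|\le\sum_i 2^{-i}\|T_i|_{Y_n}\|<\varepsilon$, whence $T$ is strictly singular and $S(T)=\bigcap_iS(T_i)$.

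Finally I would compute the intersections. Each $L(T_i)=S(T_i)=L_{\ell_i}\cup\ell_i$ is a closed half-plane of the square and each $K(T_i)=L_{\ell_i}$ is the corresponding open half-plane, so $\bigcap_i(L_{\ell_i}\cup\ell_i)=\overline P$ and $\bigcap_iL_{\ell_i}=P$, where $P=\bigcap_iL_{\ell_i}$ is a monotone convex polygon (an intersection of monotone convex half-planes, cut down by the square). Therefore $S(T)=\overline P$, $K(T)=P$, and $V(T)=S(T)\setminus K(T)=\overline P\setminus P=\partial P$, as required. The main obstacle is exactly the strict singularity of the assembled operator: without positivity one cannot invoke the Dodds--Fremlin and domination results underlying Proposition \ref{p:intersection}, and it is the disjoint-range realization together with the finite-family subspace argument that replaces them.
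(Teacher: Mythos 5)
Your proposal is correct and follows essentially the same route as the paper: the paper's (one-line) proof likewise disjointifies the underlying measure space --- into three pieces, one for the horizontal edge, one for the vertical edge, and one carrying the positive integral operators of Corollary \ref{cor:polygon} --- so that the characteristic sets of the sum become intersections without invoking domination for the non-positive summands. Your only deviations are cosmetic: you use $n$ disjoint pieces instead of three (thereby bypassing Proposition \ref{p:intersection} even for the integral operators, whose closure under sums you re-derive via the standard subspace-selection argument showing that strictly singular operators form an ideal), and you write out in full the intersection computations that the paper delegates to \cite[Example 20]{HST17}.
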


\begin{proof}
We proceed as in \cite[Example 20]{HST17}, decomposing the underlying measure space in three disjoint parts and considering, for the horizontal and vertical segments, the operators $T_1$ and $T_n$ as defined in the proof of Theorem \ref{mainline}, cases (1) and (2).
\end{proof}

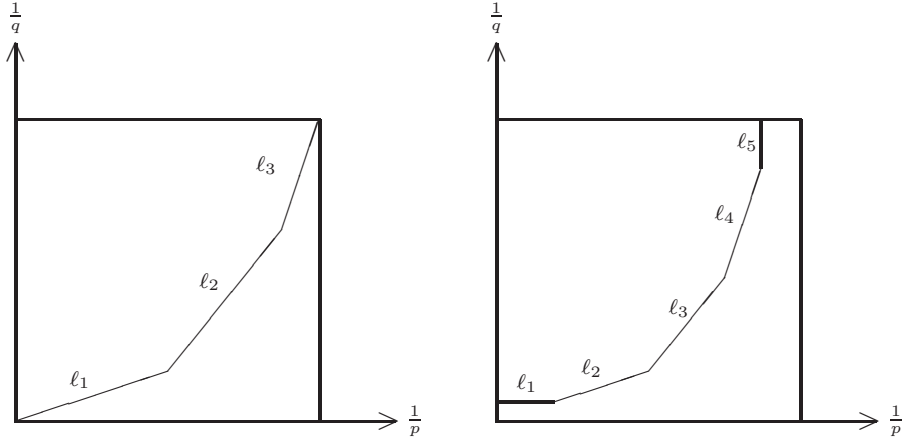
\begin{figure}
\centering
\ifx\JPicScale\undefined\def\JPicScale{0.5}\fi
\unitlength \JPicScale mm
\begin{picture}(110,110)(0,0)
\linethickness{0.3mm}
\put(-0.2,0){\line(1,0){100.1}}
\put(0,0){\line(0,1){100}}
\put(80,0){\line(0,1){80}}
\put(0,80){\line(1,0){80.2}}
\put(0,0){\line(3,1){40}}
\put(40,13.3){\line(4,5){30}}
\put(70,51){\line(1,3){9.7}}
\linethickness{0.3mm}
\put(98,0){\makebox(0,0)[cc]{$>$}}
\put(105,0){\makebox(0,0)[cc]{\tiny{$\frac1p$}}}
\put(0,98){\makebox(0,0)[cc]{$\wedge$}}
\put(0,107){\makebox(0,0)[cc]{\tiny{$\frac1q$}}}
\put(17,11){\makebox(0,0)[cc]{\tiny{$\ell_1$}}}
\put(51,37){\makebox(0,0)[cc]{\tiny{$\ell_2$}}}
\put(66,68){\makebox(0,0)[cc]{\tiny{$\ell_3$}}}
\end{picture}\quad\quad
\begin{picture}(110,110)(0,0)
\linethickness{0.3mm}
\put(-0.2,0){\line(1,0){100.1}}
\put(0,0){\line(0,1){100}}
\put(80,0){\line(0,1){80}}
\put(0,80){\line(1,0){80.2}}
\put(15,5){\line(3,1){25}}
\put(0,5){\line(1,0){15}}
\put(40,13.3){\line(4,5){20}}
\put(59.9,38.1){\line(1,3){9.7}}
\put(69.5,67){\line(0,1){13}}
\linethickness{0.3mm}
\put(98,0){\makebox(0,0)[cc]{$>$}}
\put(105,0){\makebox(0,0)[cc]{\tiny{$\frac1p$}}}
\put(0,98){\makebox(0,0)[cc]{$\wedge$}}
\put(0,107){\makebox(0,0)[cc]{\tiny{$\frac1q$}}}
\put(8,10){\makebox(0,0)[cc]{\tiny{$\ell_1$}}}
\put(25,14){\makebox(0,0)[cc]{\tiny{$\ell_2$}}}
\put(48,30){\makebox(0,0)[cc]{\tiny{$\ell_3$}}}
\put(60,55){\makebox(0,0)[cc]{\tiny{$\ell_4$}}}
\put(66,74){\makebox(0,0)[cc]{\tiny{$\ell_5$}}}
\end{picture}

\caption{The $V$-characteristic sets of the operators in Corollaries \ref{cor:polygon} and \ref{cor:polygon2}.\label{figureVT}}
\end{figure}

\begin{example}\label{curve}
Given any function $\varphi:(0,1)\rightarrow(0,1)$, let
\begin{align*}
E_\varphi&=\{(s,t):0<s<1,\varphi(s)\leq t<1\},\\
C_\varphi&=\{(t,\varphi(t)):t\in(0,1)\}.
\end{align*}
If $\varphi:(0,1)\rightarrow(0,1)$ is a convex piecewise differentiable function such that $\varphi(t)<t$, then there is a positive integral operator $T:L_\infty\rightarrow L_1$ so that
\begin{enumerate}
\item $L(T)=E_\varphi$.
\item $S(T)=E_\varphi$.
\item $V(T)$ is a dense subset of the curve $C_\varphi$.
\end{enumerate}

Indeed, we can take $(x_n)$ a dense set of $(0,1)$ such that $\varphi'(x_n)$ is well defined and positive. For each $n\in\mathbb N$, let $\ell_n$ be the line segment on $(0,1)\times (0,1)$ which goes through the point $(x_n,\varphi(x_n))$ with slope $k(\ell_n)=\varphi'(x_n)$. By Theorem \ref{mainline}, there is an operator $T_n:L_\infty\rightarrow L_1$ such that $L(T_n)=S(T_n)=L_{\ell_n}\cup\ell_n$ and $V(T_n)=\ell_n$. From convexity it follows that
$$
E_\varphi =\bigcap_{n\in\mathbb N} L_{\ell_n}\cup\ell_n.
$$
Taking $T=\sum_{n\in\mathbb N}2^{-n}T_n$, we have that
$$
L(T)=\bigcap_{n\in\mathbb N} L(T_n)=E_\varphi.
$$
Also, since $T$ is a positive integral operator, and $C_\varphi\cap\{(t,t):t\in(0,1)\}=\emptyset$, \cite[Proposition 2.6]{Flores} yields that $C_\varphi\subset S(T)$, so by Corollary \ref{c:interiorequal} we get
$$
S(T)=L(T).
$$
Finally, Proposition \ref{p:intersection} yields that
$$
V(T)=\Big(\bigcup_{n\in\mathbb N}\ell_n\Big)\cap E_\varphi=\{(x_n,\varphi(x_n)):n\in\mathbb N\}\subset C_\varphi.
$$
\end{example}

\begin{figure}
\centering
\ifx\JPicScale\undefined\def\JPicScale{0.5}\fi
\unitlength \JPicScale mm
\begin{picture}(110,110)(0,0)
\linethickness{0.3mm}
\put(-0.2,0){\line(1,0){100.1}}
\put(0,0){\line(0,1){100}}
\put(80,0){\line(0,1){80}}
\put(0,80){\line(1,0){80.2}}
\thinlines
\qbezier(0,0)(60,10)(80,80)
\put(98,0){\makebox(0,0)[cc]{$>$}}
\put(105,0){\makebox(0,0)[cc]{\tiny{$\frac1p$}}}
\put(0,98){\makebox(0,0)[cc]{$\wedge$}}
\put(0,107){\makebox(0,0)[cc]{\tiny{$\frac1q$}}}
\end{picture}

\caption{The $V$-characteristic set of an operator as in Example \ref{curve}.\label{figureVTcurve}}
\end{figure}
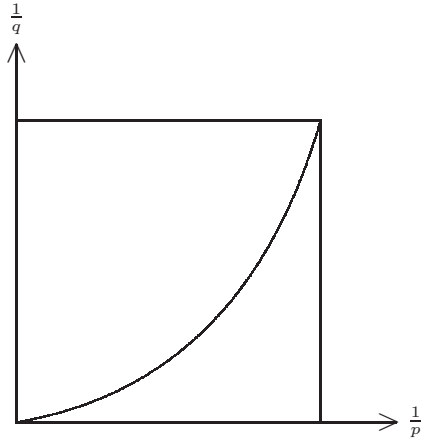

\section{Regular operators on subspaces of $L_p$}

It is well-known that in the case of regular operators (i.e., those which can be written as a difference of two positive operators) strict singularity is closer to compactness. More precisely, it was shown in \cite{CG} (see also \cite[Theorem 12]{HST17}) that for $1<q\leq p<\infty$, every strictly singular regular operator $T:L_p\rightarrow L_q$ must be compact. On the other hand, there exist simple examples of regular strictly singular operators $T:L_p\rightarrow L_q$ which are not compact when $p<q$.

In this section, we will explore this question for operators defined on a subspace of $L_p$. There is a natural definition of regularity which can be extended for operators defined on subspaces of a Banach lattice due to G. Pisier \cite{Pisier}: Given a subspace $X\subset L_p$, an operator $T:X\rightarrow L_q$ is \emph{regular} if there is $C\geq 0$ such that for all finite sequences $(x_i)_{i=1}^n\subset X$
\begin{equation}\label{eq:regular}
\Big\|\sup_{1\leq i\leq n} |Tx_i|\Big\|_q\leq C\Big\|\sup_{1\leq i\leq n} |x_i|\Big\|_p.
\end{equation}
We refer the reader to \cite{ST} for a more recent account on this and the closely related notions of $(p,q)$-regularity.

\begin{remark}
The proof of \cite[Theorem 5]{HST17} actually yields that if $p\geq 2\geq q$ every strictly singular operator from a subspace $X$ of $L_p$ into $L_q$ must be compact (regularity is not even necessary in this case).
\end{remark}

\begin{proposition}
Let $2<q<p$ and $X$ a subspace of $L_p$. If $T:X\rightarrow L_q$ is regular and strictly singular, then $T$ is compact.
\end{proposition}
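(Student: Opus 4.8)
The plan is to argue by contradiction: assuming $T$ is strictly singular but not compact, I would extract a normalized, weakly null sequence $(x_n)$ in $X$ with $\inf_n\|Tx_n\|_q=\delta>0$, and then test the regularity inequality \eqref{eq:regular} against the finite sections $(x_i)_{i=1}^n$ to reach a numerical contradiction between two growth rates in $n$.

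First I would carry out the standard reduction to a weakly null sequence. Since $1<p<\infty$, the subspace $X$ is reflexive, so a bounded sequence witnessing non-compactness of $T$ has a weakly convergent subsequence $x_n\rightharpoonup x\in X$; replacing $x_n$ by the normalization of $x_n-x$ (legitimate since $x\in X$, and since $(Tx_n)$ has no norm-convergent subsequence both $\|x_n-x\|$ and $\|T(x_n-x)\|$ stay bounded away from $0$ along a subsequence) yields a normalized weakly null $(x_n)\subset X$ with $\|Tx_n\|_q\geq\delta$. As $T$ is bounded, $(Tx_n)$ is seminormalized and weakly null in $L_q$. I would then apply the Kadec--Pelczynski dichotomy \cite{KP} twice. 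In $L_p$ (here $p>2$), after passing to a subsequence $(x_n)$ is equivalent to the unit vector basis of $\ell_2$ or of $\ell_p$, and in either case one obtains the upper estimate $\|\sum_i a_ix_i\|_p\leq B(\sum_i|a_i|^2)^{1/2}$ for all scalars (in the $\ell_p$ case using $\|\cdot\|_{\ell_p}\leq\|\cdot\|_{\ell_2}$, valid as $p>2$). In $L_q$ (here $q>2$), after a further subsequence $(Tx_n)$ is equivalent to the $\ell_2$-basis or to the $\ell_q$-basis.

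The $\ell_2$ alternative for $(Tx_n)$ is where strict singularity enters and is ruled out: the lower $\ell_2$-estimate for $(Tx_n)$ combined with the upper $\ell_2$-estimate for $(x_n)$ would give $\|T(\sum_i a_ix_i)\|_q\geq A(\sum_i|a_i|^2)^{1/2}\geq (A/B)\|\sum_i a_ix_i\|_p$, so that $T$ would be bounded below on the infinite-dimensional subspace $[x_n]\subset X$, contradicting strict singularity. Hence $(Tx_n)$ is equivalent to the $\ell_q$-basis, and by \cite{KP} I may assume it is a small perturbation of a disjoint sequence $(u_n)$ in $L_q$, with $\|u_n\|_q\geq\delta/2$ and $\sum_n\|Tx_n-u_n\|_q=:M<\infty$. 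Disjointness then yields $\|\sup_{i\leq n}|u_i|\|_q=(\sum_{i\leq n}\|u_i\|_q^q)^{1/q}\geq(\delta/2)n^{1/q}$, and subtracting the uniformly bounded perturbation gives $\|\sup_{i\leq n}|Tx_i|\|_q\geq(\delta/2)n^{1/q}-M$. On the other hand, the pointwise bound $\sup_{i\leq n}|x_i|\leq(\sum_{i\leq n}|x_i|^p)^{1/p}$ gives $\|\sup_{i\leq n}|x_i|\|_p\leq n^{1/p}$, so the regularity inequality \eqref{eq:regular} forces $\|\sup_{i\leq n}|Tx_i|\|_q\leq Cn^{1/p}$. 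Comparing, $(\delta/2)n^{1/q}-M\leq Cn^{1/p}$ for all $n$, which is impossible since $1/q>1/p$; this contradiction shows $T$ is compact.

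The main obstacle is conceptual rather than computational: because $X$ is only a subspace and not a sublattice of $L_p$, one cannot invoke the lattice characterizations of strict singularity and compactness (as in \cite{MN,FHKT}) directly, nor assume $T$ extends to a regular operator on all of $L_p$ so as to quote \cite{CG}. Thus the whole dichotomy and disjointification must be produced \emph{intrinsically} inside $X$ via \cite{KP}, and the lattice structure is used only through the single inequality \eqref{eq:regular}, which must be applied precisely to the finite sections $(x_i)_{i=1}^n$. The delicate point is therefore to ensure that this one use of regularity genuinely caps $\|\sup_{i\leq n}|Tx_i|\|_q$ at the rate $n^{1/p}$ while the disjoint structure of $(Tx_n)$, forced by strict singularity, pushes it up to $n^{1/q}$; the hypothesis $q<p$ is exactly what makes these two rates incompatible.
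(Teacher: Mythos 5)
Your proposal is correct and follows essentially the same route as the paper: assume non-compactness, extract a normalized weakly null sequence, apply the Kadec--Pelczynski dichotomy to $(x_n)$ in $L_p$ and to $(Tx_n)$ in $L_q$, use strict singularity to force $(Tx_n)$ into the (almost) disjoint $\ell_q$ alternative, and then play the regularity inequality \eqref{eq:regular} on the finite sections to get the incompatible growth rates $n^{1/q}\lesssim n^{1/p}$. Your version merely spells out a few steps the paper leaves implicit (the explicit exclusion of the $\ell_2$ alternative for $(Tx_n)$ and the perturbation bookkeeping for the disjointification), so no substantive difference.
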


\begin{proof}
Let $T:X\rightarrow L_q$ be a regular and strictly singular operator, and suppose that $T$ is not compact. Hence, there exists a normalized weakly null sequence $(x_n)\subset X$ such that $\|Tx_n\|\geq \alpha>0$ for every $n\in\mathbb N$. Using the Kadec-Pelczynski alternative \cite{KP} both for $(x_n)$ and $(Tx_n)$, together with the fact that $2<q<p$, it follows that necessarily $(x_n)$ must be equivalent to the unit vector basis of $\ell_2$ while $(Tx_n)$ must be equivalent to the unit vector basis of $\ell_q$. In particular, without loss of generality we can assume $(Tx_n)$ are pairwise disjoint and then using regularity of $T$ we get
\begin{align*}
\alpha n^{1/q}&\leq \Big\|\Big(\sum_{i=1}^n|Tx_i|^q\Big)^{1/q}\Big\|_q = \Big\|\sup_{1\leq i\leq n}|Tx_i|\Big\|_q \\
&\leq C \Big\|\sup_{1\leq i\leq n}|x_i|\Big\|_p \leq  C\Big\|\Big(\sum_{i=1}^n|x_i|^p\Big)^{1/p}\Big\|_p\\
&\leq C n^{1/p},
\end{align*}
which is a contradiction for large $n$.
\end{proof}

We will see next by means of particular examples that in all remaining cases, \cite[Theorem 12]{HST17} cannot be extended for operators defined on an arbitrary subspace of $L_p$.

\begin{proposition}
Given $q\leq p\leq s<2$, there is a subspace $X\subset L_p$ isomorphic to $\ell_s$ and a regular strictly singular operator $T:X\rightarrow L_q$ which is not compact.
\end{proposition}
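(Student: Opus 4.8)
The plan is to realize both the domain copy of $\ell_s$ and the range as closed spans of independent random variables, to take $T$ to be the natural ``change of exponent'' diagonal map between them, and to concentrate all the difficulty in the verification of regularity.

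First I would fix the subspace. For $p<s<2$, let $(x_k)$ be a sequence of normalized independent symmetric $s$-stable random variables in $L_p$; it is classical that $\|\sum_k a_k x_k\|_r=c_{r,s}(\sum_k|a_k|^s)^{1/s}$ for every $r<s$, so that $X:=[x_k]$ is isomorphic to $\ell_s$. In the boundary case $s=p$ the $p$-stable variables fail to lie in $L_p$, and I would instead use i.i.d.\ symmetric variables in the domain of normal attraction of a $p$-stable law (with a slowly varying tail correction), which still span a copy of $\ell_p$ inside $L_p$. The feature I need in both cases is that the copy be \emph{spread out}, namely $\|\sum_{k\le N}|x_k|\|_p\approx N$ rather than $\approx N^{1/p}$ (for a genuine $s$-stable system with $1\le p<s<2$ this follows from the fact that the heavy tail contributes $N^{1+p-s}\ll N^p$ to the $p$-th moment). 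For the range, let $(y_k)$ be normalized independent Gaussian (equivalently, Rademacher) variables in $L_q$, so that $[y_k]$ is isomorphic to $\ell_2$, with $\|\sum_k a_k y_k\|_q\approx(\sum_k a_k^2)^{1/2}$. Define $T:X\to L_q$ by $Tx_k=y_k$.

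Boundedness, strict singularity and non-compactness are then immediate. Since $s<2$ one has $(\sum a_k^2)^{1/2}\le(\sum|a_k|^s)^{1/s}$, so $T$ is bounded; moreover $T$ factors as
$$
X \xrightarrow{\;\cong\;}\ell_s \xrightarrow{\;i_{s,2}\;} \ell_2 \xrightarrow{\;\cong\;}[y_k]\subset L_q,
$$
and since the formal inclusion $i_{s,2}:\ell_s\hookrightarrow\ell_2$ is strictly singular for $s<2$ and strict singularity is an ideal, $T$ is strictly singular. It is not compact because $(Tx_k)=(y_k)$ is a normalized weakly null sequence in $L_q$ with no norm-convergent subsequence.

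The main obstacle is to prove that $T$ is regular in the sense of \eqref{eq:regular}. Here the choice of a spread copy $X$ is in fact forced: testing \eqref{eq:regular} on the family $\{\sum_{k\le N}\varepsilon_k x_k:\varepsilon\in\{\pm1\}^N\}$ and using that $\sup_{\varepsilon}|\sum_k\varepsilon_k u_k|=\sum_k|u_k|$ pointwise, one sees that regularity necessarily requires $\|\sum_{k\le N}|y_k|\|_q\lesssim\|\sum_{k\le N}|x_k|\|_p$. As $\|\sum_{k\le N}|y_k|\|_q\approx N$ for a Gaussian (or Rademacher) system, the domain copy must satisfy $\|\sum_{k\le N}|x_k|\|_p\approx N$, precisely the property secured above and precisely what fails for the disjoint copy of $\ell_s$. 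To obtain the full inequality for arbitrary finite families, I would invoke Pisier's description of regular operators on a subspace $X\subset L_p$ as those extending to a regular operator $\widehat{T}:L_p\to L_q$, and build such a positive extension from the randomized representation of the two systems; for instance, writing $x_k=A_k^{1/2}g_k$ via subordination of the Gaussians $g_k=y_k$ by independent positive $(s/2)$-stable subordinators $A_k$, and reading $T$ off as multiplication/conditioning against a positive kernel. Establishing positivity of this extension (equivalently, verifying directly that $\|\sup_i|\sum_k c_{ik}y_k|\|_q\le C\|\sup_i|\sum_k c_{ik}x_k|\|_p$ from the independence and rotation invariance of the stable and Gaussian systems) is the technical heart of the argument and the step I expect to be hardest; the sign-configuration computation above should be regarded as the extremal case that the general estimate must reproduce.
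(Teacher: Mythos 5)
There is a genuine gap, and it sits exactly at the step you yourself flag as ``the technical heart of the argument'': the regularity of $T$ is never proved. Boundedness, strict singularity and non-compactness of a diagonal map from an $\ell_s$-system onto a Gaussian/Rademacher system are indeed immediate, but they are not the content of the proposition -- the whole point is to produce such an operator that is also \emph{regular} in the sense of \eqref{eq:regular}, since on all of $L_p$ (for $q\le p$) regular strictly singular operators are automatically compact. Your plan for this step is a programme, not a proof: the subordination $x_k=A_k^{1/2}g_k$ does not obviously yield a positive extension $\widehat T:L_p\to L_q$ (the subordinators $A_k$ can be arbitrarily small, so neither the Gaussian contraction principle nor Slepian-type comparison gives the inequality $\|\sup_i|\sum_k c_{ik}y_k|\|_q\le C\|\sup_i|\sum_k c_{ik}x_k|\|_p$ with a uniform constant -- the natural conditional estimate loses a factor $\min_kA_k^{1/2}$, which degenerates logarithmically), and it is not even clear that the diagonal map between a pure stable system and a Gaussian system is regular at all. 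Your sign-vector computation is a correct \emph{necessary} condition and a useful sanity check, but it is very far from sufficient.

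The paper sidesteps this difficulty entirely by a disjoint-support gluing: it takes $g_n$ (an $s$-stable system, or a disjoint sequence when $s=p$) supported on $[0,\frac12]$, Rademachers $r_n$ supported on $[\frac12,1]$, sets $x_n=g_n+r_n$ (still equivalent to the unit vector basis of $\ell_s$ because $s<2$), and defines $Tf=f\chi_{[\frac12,1]}$. Then $T$ is the restriction to $X=[x_n]$ of the positive operator ``multiply by $\chi_{[\frac12,1]}$'' composed with the inclusion $L_p\hookrightarrow L_q$, so \eqref{eq:regular} holds trivially with $C=\|I_{p,q}\|$, while $Tx_n=r_n$ spans $\ell_2$, giving strict singularity without compactness exactly as in your factorization through $i_{s,2}$. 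Note that this construction also satisfies your necessary condition automatically -- the Rademacher half supplies the ``spread'' $\|\sum_{k\le N}|x_k|\|_p\approx N$ -- without requiring the stable system itself to be spread out. If you want to salvage your approach, you should either carry out the positive-extension argument in full (and I would not expect it to go through without modification) or adopt the disjoint-support trick, which reduces regularity to a one-line observation.
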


\begin{proof}
If $p<s<2$, then let $(g_n)$ be a sequence of independend identically distributed $s$-stable random variables supported on $[0,\frac12]$, while if $s=p$, then let $(g_n)$ be a normalized pairwise disjoint sequence supported on $[0,\frac12]$. Let $(r_n)$ be a sequence of Rademacher random variables supported on $[\frac12,1]$. Let $x_n=g_n+r_n$. Since $(g_n)$ are equivalent to the unit vector basis of $\ell_s$, and $(r_n)$ are equivalent to the unit vector basis of $\ell_2$ we have
\begin{align*}
\Big\| \sum_{n=1}^m a_n x_n\Big\|_p &=\Big( \Big\| \sum_{n=1}^m a_n g_n\Big\|_p^p+\Big\| \sum_{n=1}^m a_n r_n\Big\|_p^p\Big)^{1/p}\\
&\approx \max\Big\{ \Big\| \sum_{n=1}^m a_n g_n\Big\|_p, \Big\| \sum_{n=1}^m a_n r_n\Big\|_p\Big\}\\
&\approx \Big(\sum_{n=1}^m |a_n|^s\Big)^{1/s}.
\end{align*}
Here, the last equivalence follows from the fact that $s<2$. Let $X$ be the closed linear span of $(x_n)$ in $L_p$, which is isomorphic to $\ell_s$.

Let now consider the operator $T:X\rightarrow L_q$ given by $Tf=f\chi_{[\frac12,1]}$. Clearly, $T$ is a regular operator. Moreover, as $Tx_n=r_n$, where $(r_n)$ is equivalent to the unit vector basis of $\ell_2$ and $s<2$, it follows that $T$ is strictly singular but not compact.
\end{proof}

\begin{proposition}
Given $p>2$, there is a subspace $X\subset L_p$ isomorphic to $\ell_2$ and a regular strictly singular operator $T:X\rightarrow L_p$ which is not compact.
\end{proposition}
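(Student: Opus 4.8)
The plan is to produce $X$ and $T$ by the same disjoint-plus-Rademacher device used in the previous proposition, only interchanging the roles played by the exponents. Fix the decomposition $(0,1)=[0,\tfrac12]\cup[\tfrac12,1]$. Let $(f_n)$ be a normalized sequence of pairwise disjoint functions in $L_p$ supported on $[0,\tfrac12]$, and let $(r_n)$ be a Rademacher sequence supported on $[\tfrac12,1]$. Set $x_n=f_n+r_n$, let $X=[x_n]$, and take the operator to be the restriction $Tf=f\chi_{[0,1/2]}$, which is the band projection onto the functions supported on $[0,\tfrac12]$. Being positive, $T$ satisfies \eqref{eq:regular} with constant $1$, so it is regular.

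First I would check that $X$ is isomorphic to $\ell_2$. Since the $f_n$ and the $r_n$ have disjoint supports, for scalars $(a_n)$ one computes
\[
\Big\|\sum_n a_n x_n\Big\|_p^p=\Big\|\sum_n a_nf_n\Big\|_p^p+\Big\|\sum_n a_nr_n\Big\|_p^p=\sum_n|a_n|^p+\Big\|\sum_n a_nr_n\Big\|_p^p.
\]
By Khintchine's inequality the second term is comparable to $\big(\sum_n|a_n|^2\big)^{p/2}$, so that $\big\|\sum_n a_nx_n\big\|_p\approx\big(\|a\|_p^p+\|a\|_2^p\big)^{1/p}$. Because $p>2$ we have $\|a\|_p\le\|a\|_2$, whence $\big\|\sum_n a_nx_n\big\|_p\approx\|a\|_2$; thus $(x_n)$ is equivalent to the unit vector basis of $\ell_2$ and $X\cong\ell_2$.

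Next, observe that $Tx_n=f_n$, and that a normalized pairwise disjoint sequence in $L_p$ is isometrically equivalent to the unit vector basis of $\ell_p$. Hence, writing $U\colon X\to\ell_2$ for the isomorphism $x_n\mapsto e_n$ and $V\colon\ell_p\to[f_n]$ for the isometric embedding $e_n\mapsto f_n$, the operator $T$ coincides with $V\circ i_{2,p}\circ U$, where $i_{2,p}\colon\ell_2\hookrightarrow\ell_p$ is the formal inclusion. Since $2<p$, this inclusion is strictly singular but not compact, as recalled in the introduction. The ideal property of strictly singular operators then gives that $T=Vi_{2,p}U$ is strictly singular, while the identity $i_{2,p}=V^{-1}TU^{-1}$ (with $U^{-1},V^{-1}$ bounded) shows that $T$ cannot be compact. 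This furnishes the desired regular, strictly singular, non-compact operator.

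The only genuine point to verify carefully is the isomorphism $X\cong\ell_2$, i.e.\ that the $\ell_p$-contribution of the disjoint part is absorbed by the $\ell_2$-contribution of the Rademacher part; this is exactly where the hypothesis $p>2$ enters, through the comparison $\|a\|_p\le\|a\|_2$. Everything else — the regularity of a band projection, the identification of $(f_n)$ with the $\ell_p$-basis, and the transfer of strict singularity and non-compactness through the factorization — is routine.
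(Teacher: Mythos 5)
Your proof is correct and follows essentially the same route as the paper: the same disjoint-plus-Rademacher construction $x_n=f_n+r_n$ with $X=[x_n]$ and the band projection $Tf=f\chi_{[0,1/2]}$, with the hypothesis $p>2$ entering exactly where the paper uses it, namely to show $\|a\|_p\le\|a\|_2$ so that $(x_n)$ spans a copy of $\ell_2$ while $Tx_n=f_n$ spans a copy of $\ell_p$. Your explicit factorization of $T$ through the formal inclusion $i_{2,p}\colon\ell_2\hookrightarrow\ell_p$ is just a slightly more detailed rendering of the paper's concluding sentence, so there is nothing further to add.
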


\begin{proof}
Let $(h_n)$ be a sequence of normalized pairwise disjoint functions in $L_p$ whose support is contained in $[0,\frac12]$. Let $(r_n)$ be a sequence of Rademacher random variables supported on $[\frac12,1]$. Let $x_n=h_n+r_n$. Since $(h_n)$ are equivalent to the unit vector basis of $\ell_p$, and $(r_n)$ are equivalent to the unit vector basis of $\ell_2$ we have
\begin{align*}
\Big\| \sum_{n=1}^m a_n x_n\Big\|_p &=\Big( \Big\| \sum_{n=1}^m a_n h_n\Big\|_p^p+\Big\| \sum_{n=1}^m a_n r_n\Big\|_p^p\Big)^{1/p}\\
&\approx \max\Big\{ \Big\| \sum_{n=1}^m a_n h_n\Big\|_p, \Big\| \sum_{n=1}^m a_n r_n\Big\|_p \Big\}\\
&\approx \Big(\sum_{n=1}^m a_n^2\Big)^{1/2}.
\end{align*}
Here, the last equivalence follows from the fact that $p>2$. Let $X$ be the closed linear span of $(x_n)$ in $L_p$, which is isomorphic to $\ell_2$.

Let now consider the operator $T:X\rightarrow L_p$ given by $Tf=f\chi_{[0,\frac12]}$. Clearly, $T$ is a regular operator. Now, since $Tx_n=h_n$, with $(h_n)$ equivalent to the unit vector basis of $\ell_p$ and $p>2$, it follows that $T$ is strictly singular but not compact.
\end{proof}

\end{document}